\newtheorem{theorem}{Theorem}[section]
\newtheorem{cor}[theorem]{Corollary}
\newtheorem{theo}[theorem]{Theorem}
\newtheorem{lem}[theorem]{Lemma}
\newtheorem{pro}[theorem]{Proposition}
\newtheorem{rem}[theorem]{Remark}
\newtheorem{exa}[theorem]{Example}
\newtheorem{Definition}[theorem]{Definition}
\newtheorem*{Definition*}{Definition}
\def\qed{\hfill \ifhmode\unskip\nobreak\fi\quad\ifmmode\Box\else$\Box$\fi\\ }
\begin{document}

\title[Unitary $S^1$-manifolds with discrete fixed point sets]{Circle actions on unitary manifolds with discrete fixed point sets}
\author{Donghoon Jang}
\address{Department of Mathematics, Pusan National University, Pusan, Korea}
\email{donghoonjang@pusan.ac.kr}
\thanks{MSC 2020: 58C30, 58J20}
\thanks{Keywords: unitary manifold, circle action, stable complex manifold, fixed point, weight, multigraph, Chern class}
\thanks{Donghoon Jang was supported by the National Research Foundation of Korea(NRF) grant funded by the Korea government(MSIT) (2021R1C1C1004158).}
\begin{abstract}
In this paper, we prove various results for circle actions on compact unitary manifolds with discrete fixed point sets, generalizing results for almost complex manifolds. For a circle action on a compact unitary manifold with a discrete fixed point set, we prove relationships between weights at the fixed points. As a consequence, we show that there is a multigraph that encodes the fixed point data (a collection of multisets of weights at the fixed points) of the manifold; this can be used to study unitary $S^1$-manifolds in terms of multigraphs. We derive results regarding the first equivariant Chern class, obtaining a lower bound on the number of fixed points under an assumption on a  manifold. We determine the Hirzebruch $\chi_y$-genus of a compact unitary manifold admitting a semi-free $S^1$-action, and obtain a lower bound on the number of fixed points. 
\end{abstract}
\maketitle

\section{Introduction} \label{s1}

The purpose of this paper is to prove various results for circle actions on compact unitary manifolds with discrete fixed point sets, generalizing results for circle actions on almost complex manifolds. Any complex manifold or any symplectic manifold is almost complex, and any almost complex manifold is unitary. Our results therefore also generalize results for complex manifolds and symplectic manifolds. 

Consider an action of the circle group $S^1$ on a manifold. Then the dimension of the manifold and the dimension of any fixed component have the same parity. Thus, if the fixed point set is non-empty and discrete, because the fixed point set is $0$-dimensional, the dimension of the manifold is necessarily even; if in addition the manifold is compact, then the number of fixed points is finite. Therefore, unitary $S^1$-manifolds with (non-empty) discrete fixed point sets that we consider in this paper are even dimensional, while a general unitary manifold need not have even dimension.

A fundamental difference between unitary $S^1$-manifolds and almost complex $S^1$-manifolds for our generalizations is that for a unitary $S^1$-manifold some sign issue occurs at each fixed point, depending on two orientations on the tangent space at the fixed point; see Section \ref{s2} for detailed explanation. Our generalizations are done by carefully handling this sign issue.

We generalize several results on relationships between weights at the fixed points for almost complex $S^1$-manifolds to unitary $S^1$-manifolds. These are Proposition \ref{p31}, Lemma \ref{l33}, and Corollary \ref{c36}. To illuminate, let the circle group $S^1$ act on a $2n$-dimensional compact unitary manifold $M$ with a discrete fixed point set. From Proposition \ref{p31}, whose statement and proof are a bit technical, we prove Lemma \ref{l33} that for any integer $w$,
\begin{center}
$\displaystyle \sum_{p \in M^{S^{1}}} \epsilon(p)  \cdot N_{p}(w)=\sum_{p \in M^{S^{1}}} \epsilon(p) \cdot N_{p}(-w)$,
\end{center}
where $N_{p}(w)=|\{j \, | \, w_{p,j}=w, 1 \leq j \leq n\}|$ is the number of times $w$ occurs as weights at $p$ and $\epsilon(p)$ is either $+1$ or $-1$, depending on two orientations on the tangent space $T_pM$ at $p$. Consequently, we obtain Corollary \ref{c36} that
\begin{center}
$\displaystyle \sum_{p \in M^{S^1}} \epsilon(p) \cdot c_1(M)(p)=0$,
\end{center}
where $c_1(M)(p)$ is the first equivariant Chern class of $M$ at $p$. Letting $\epsilon(p)=+1$ for all fixed point $p$ recovers corresponding results for almost complex manifolds.

Using the above relationships, we show that as for an almost complex $S^1$-manifold, for a unitary $S^1$-manifold there exists a multigraph that encodes the fixed point data of the manifold, a collection of multisets of the weights at the fixed points; see Definition \ref{d2} and Proposition \ref{p312}. Moreover, the multigraph contains information on which fixed points are in the same isotropy submanifold; see (4) of Definition \ref{d2}. On the other hand, a multigraph for a unitary $S^1$-manifold assigns every vertex a sign $+1$ or $-1$, while a multigraph for an almost complex $S^1$-manifold does not have such an issue; the sign of every vertex is $+1$. Such a multigraph can be used to study a unitary $S^1$-manifold, and we illustrate this in Examples \ref{ex1}, \ref{ex2}, \ref{ex3}, and \ref{ex4}.

For a symplectic circle action on a compact symplectic manifold $M$, if the Chern class map is somewhere injective (see the definitions in Section \ref{s3}), Pelayo and Tolman proved that there are at least $\frac{1}{2} \dim M+1$ fixed points \cite{PT}. We note that the same proof applies to a unitary manifold, if we take accout of the sign issue that occurs to each fixed point; see Lemma \ref{l37} and Theorem \ref{t38}. 

For a semi-free circle action on a compact almost complex manifold $M$ with a discrete fixed point set, Tolman and Weitsman proved that there are exactly $\textrm{Todd}(M) \cdot 2^{\frac{\dim M}{2}}$ fixed points \cite{TW}. We extend the result to unitary manifolds; if $\textrm{Todd}(M) \neq 0$, then there are at least $|\textrm{Todd}(M)| \cdot 2^{\frac{\dim M}{2}}$ fixed points, see Theorem \ref{t39}. For a unitary manifold, however, on any even dimension there is an example of a semi-free action with 2 fixed points, a rotation of the $2n$-sphere $S^{2n}$; take $a_i=1$ for all $i$ in Example \ref{ex3}.

We explain the structure of this paper. In Section \ref{s2}, we review necessary background and preliminary results, and derive a formula (Theorem \ref{t21}) for a unitary $S^1$-manifold with a discrete fixed point set, which follows from the rigidity of the equivariant $T_{x,y}$-genus proved by Krichever \cite{Kr}, Theorem \ref{t20}. Section \ref{s3} is the main part of this paper, in which we prove the aforementioned results on circle actions on compact unitary manifolds with discrete fixed point sets.

The author would like to thank the anonymous referee for helpful comments and suggestions, which help improve the exposition of this paper.

\section{Background and preliminaries} \label{s2}

A \textbf{unitary structure} (also called a \textbf{stable complex structure} or a \textbf{weakly almost complex structure}) on a manifold $M$ is a complex structure on the vector bundle $TM \oplus \underline{\mathbb{R}}^k$ for some non-negative integer $k$, where $TM$ is the tangent bundle of $M$ and $\underline{\mathbb{R}}^k$ denotes the $k$-dimensional trivial bundle over $M$. A \textbf{unitary manifold} (also called a \textbf{weakly almost complex manifold}) is a manifold with a unitary structure on it. An \textbf{almost complex structure} on a manifold is a bundle map on its tangent bundle, which restricts to a linear complex structure on each tangent space. An \textbf{almost complex manifold} is a manifold with an almost complex structure on it. By definition, any almost complex manifold is unitary but a unitary manifold need not admit an almost complex structure. For instance, for any $n$ the $n$-sphere $S^n$ admits a unitary structure since $TS^n \oplus \underline{\mathbb{R}}^n = S^n \times \mathbb{R}^{2n}=S^n \times \mathbb{C}^n$, but among the $n$-spheres only the 2-sphere $S^2$ and the 6-sphere $S^6$ admit almost complex structures.

Any unitary manifold is oriented in a natural way as follows. Let $M$ be a unitary manifold; suppose that there is a complex structure on the bundle $TM \oplus \underline{\mathbb{R}}^k$ for some non-negative integer $k$. Then the complex structure on the bundle $TM \oplus \underline{\mathbb{R}}^k$ induces an orientation on it, and the bundle $\underline{\mathbb{R}}^k$ is oriented in the usual way. The orientation on $TM \oplus \underline{\mathbb{R}}^k$ and the orientation on $\underline{\mathbb{R}}^k$ then determine an orientation of $TM$ (of $M$) and hence an orientation on the tangent space $T_mM$ for each $m \in M$, which we record as the first orientation on $T_mM$.

For an action of a group $G$ on a manifold $M$, denote by $M^G$ its fixed point set, that is, 
\begin{center}
$M^G=\{m \in M \, | \, g \cdot m=m \textrm{ for all }g \in G\}$.
\end{center}
If $H$ is a subgroup of $G$, $H$ acts on $M$; we denote the $H$-fixed point set by $M^H$ accordingly.

Let the circle group $S^1$ act on a unitary (almost complex) manifold $M$. Let $J$ denote the unitary (almost complex) structure on $TM \oplus \underline{\mathbb{R}}^k$ (on $TM$). The action on $M$ induces an action on its tangent bundle $TM$. We say that the action \textbf{preserves the unitary (almost complex) structure} $J$ if $J \circ dg=dg \circ J$ for all $g \in S^1$, where the circle acts on $\underline{\mathbb{R}}^k$ trivially for $M$ unitary. Alternatively, we say that $J$ is an equivariant unitary (almost complex, respectively) structure.

Let the circle act on a unitary manifold $M$. Throughout this paper, any circle action on a unitary manifold is assumed to preserve the unitary structure. Let $p$ be an isolated fixed point. Because the dimension of $M$ and the dimension of any fixed component have the same parity and an isolated fixed point has dimension 0, the dimension of $M$ is necessarily even; let $\dim M=2n$.
Applying Lemma \ref{l23} below with taking $G=H=S^1$ and $F=p$, the equivariant unitary structure on $M$ induces a fiberwise complex structure on the normal space $N_pM$ of $p$, which is equal to the tangent space $T_pM$, such that $S^1$ acts on $N_pM$ by complex bundle automorphisms. Then the tangent space $T_pM(=N_pM)$ to $M$ at $p$ is a complex vector space that decomposes into the sum of $n$ complex 1-dimensional spaces
\begin{center}
$\displaystyle T_pM=\oplus_{i=1}^n L_{p,i}$
\end{center}
for some $n$, where on each $L_{p,i}$ the circle acts as multiplication by $g^{w_{p,i}}$ for all $g \in S^1$, for some non-zero integers $w_{p,i}$, $1 \leq i \leq n$. This also shows that the dimension of $M$ is even. The non-zero integers $w_{p,1},\cdots,w_{p,n}$ are called the \textbf{weights} at $p$. The \textbf{index} $n_{p}^M$ of $p$ is the number of negative weights at $p$. We shall use $n_p$ instead of $n_{p}^M$ if there is no confusion. The complex structure on $T_pM$ induces an orientation of $T_pM$, which we record as the second orientation of $T_pM$.

We have discussed that the tangent space $T_pM$ has two orientations, the first one induced from the orientation of $M$ ($TM$) and second one induced from the complex structure on $T_pM$. We define the \textbf{sign} of $p$, denoted by $\epsilon_M(p)$, to be $+1$ if the two orientations on $T_pM$ agree, and $-1$ otherwise. If there is no confusion, we shall use $\epsilon(p)$ for $\epsilon_M(p)$, omitting $M$.

Let $M$ be a compact unitary manifold. Let $T_{x,y}(M)$ be the $T_{x,y}$-genus of $M$, where $T_{x,y}$-genus is the genus belonging to the power series 
\begin{center}
$\displaystyle \frac{u(xe^{u(x+y)}+y)}{e^{u(x+y)}-1}$.
\end{center}
Suppose that the circle acts on $M$ with a non-empty discrete fixed point set. Then the dimension of $M$ is even; let $\dim M=2n$. In \cite{Kr}, Krichever proved that the equivariant $T_{x,y}$-genus of $M$ is rigid; it is independent of the choice of an element of $S^1$ and is equal to the $T_{x,y}$-genus of $M$. By the Krichever's result, the following formula holds for all indeterminates $z$.

\begin{center}
$\displaystyle T_{x,y}(M)=\sum_{p \in M^{S^1}} \epsilon(p) \cdot \prod_{i=1}^n \frac{xz^{w_{p,i}}+y}{z^{w_{p,i}}-1}$ 
\end{center}
Letting $t=z^{-1}$ and rewriting, we obtain the following formula.

\begin{theorem} \label{t20} \cite{Kr} Let the circle act on a $2n$-dimensional compact unitary manifold $M$ with a discrete fixed point set. Then the $T_{x,y}$-genus of $M$ satisfies
\begin{center}
$\displaystyle T_{x,y}(M)=\sum_{p \in M^{S^1}} \epsilon(p) \cdot \prod_{i=1}^n \frac{x+y \cdot t^{w_{p,i}}}{1-t^{w_{p,i}}}$
\end{center}
for all indeterminates $t$. 
\end{theorem}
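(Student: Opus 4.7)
The plan is to read the statement off Krichever's rigidity theorem for the equivariant $T_{x,y}$-genus and then perform a single change of variable. Rigidity together with the Atiyah--Bott--Berline--Vergne localization formula yields the identity displayed immediately before the theorem: the ordinary $T_{x,y}$-genus of $M$ equals the localization sum in the variable $z \in S^1$, with each isolated fixed point $p$ contributing $\epsilon(p) \cdot \prod_{i=1}^n \frac{x z^{w_{p,i}} + y}{z^{w_{p,i}} - 1}$. So the first step is simply to take this displayed identity as the starting point.

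Given this, the theorem follows from a purely algebraic manipulation. Substituting $t = z^{-1}$ in each local factor and then multiplying numerator and denominator by $t^{w_{p,i}}$ converts $\frac{x z^{w_{p,i}} + y}{z^{w_{p,i}} - 1}$ into $\frac{x + y\, t^{w_{p,i}}}{1 - t^{w_{p,i}}}$. Since the sign $\epsilon(p)$ and the summation over $p \in M^{S^1}$ are unaffected, the claimed identity holds term by term. Because $z$ was an arbitrary indeterminate in the source formula, so is $t$.

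The one substantive point, and the only place where the unitary rather than almost complex setting intervenes, is the sign $\epsilon(p)$. Localization demands the ambient orientation on $T_pM$ inherited from the unitary structure on $TM \oplus \underline{\mathbb{R}}^k$, whereas the equivariant Euler-class denominator $\prod_i (z^{w_{p,i}} - 1)$ is computed in the complex orientation coming from the weight decomposition $T_pM = \oplus_{i=1}^n L_{p,i}$. Their ratio is precisely $\epsilon(p)$ by the definition recorded in Section \ref{s2}, which is exactly why the unitary generalization of the almost complex formula costs nothing beyond inserting this sign at each fixed point. This orientation bookkeeping is the only nontrivial ingredient, and it has already been set up; I do not expect any further obstacle.
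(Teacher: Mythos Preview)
Your proposal is correct and matches the paper's approach exactly: the paper likewise takes Krichever's rigidity formula in the variable $z$ as given, then obtains the stated form by the substitution $t=z^{-1}$ and the same numerator/denominator rescaling you describe. Your additional paragraph on the origin of $\epsilon(p)$ is accurate commentary but not part of the paper's derivation, which simply cites \cite{Kr} for the $z$-formula.
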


For a compact unitary manifold $M$, write $T_{x,y}(M)=\sum_{i=0}^n T^i \cdot x^i \cdot y^{n-i}$ for some integers $T_i$, $0 \leq i \leq n$. If $x=1$, then $T_{1,y}(M)=\chi_y(M)$ is the Hirzebruch $\chi_y$-genus (also called the $T_y$-genus) of $M$, where the Hirzebruch $\chi_y$-genus is the genus belonging to the power series
\begin{center}
$\displaystyle \frac{u(e^{u(1+y)}+y)}{e^{u(1+y)}-1}$.
\end{center}
Multiplying both of the numerator and denominator by $e^{-u(1+y)}$, this is equivalent to
\begin{center}
$\displaystyle \frac{u(1+ye^{-u(1+y)})}{1-e^{-u(1+y)}}$.
\end{center}
As for the $T_{x,y}$-genus, write $\chi_y(M)=\sum_{i=0}^n \chi^i(M) \cdot y^i$ for some integers $\chi^i(M)$; then $\chi^i(M)=T^i$, $0 \leq i \leq n$.

\begin{theorem} \label{t21}
Let the circle act on a $2n$-dimensional compact unitary manifold $M$ with a discrete fixed point set. For each integer $i$ such that $0 \leq i \leq n$,
\begin{center}
$\displaystyle \chi^i(M)=\sum_{p \in M^{S^1}} \epsilon(p) \cdot \frac{\sigma_i (t^{w_{p,1}}, \cdots, t^{w_{p,n}})}{\prod_{j=1}^n (1-t^{w_{p,j}})} = (-1)^i \cdot N_i = (-1)^i \cdot N_{n-i}=(-1)^n \cdot \chi^{n-i}(M)$,
\end{center}
where $\chi_y(M)=\sum_{i=0}^n \chi^i(M) \cdot y^i$ is the Hirzebruch $\chi_y$-genus of $M$, $t$ is an indeterminate, $\sigma_i$ is the $i$-th elementary symmetric polynomial in $n$ variables, and
\begin{center} 
$N_i=|\{p \in M^{S^1}\,|\,n_p=i, \epsilon(p)=+1\}|-|\{p \in M^{S^1}\,|\,n_p=i, \epsilon(p)=-1\}|$ 
\end{center}
is the number of fixed points $p$ with $n_p=i$ and $\epsilon(p)=+1$ minus the number of fixed points $p$ with $n_p=i$ and $\epsilon(p)=-1$.
\end{theorem}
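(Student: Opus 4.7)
The plan is to derive everything from Theorem \ref{t20} by specializing $x=1$ and reading off coefficients of powers of $y$. Setting $x=1$ in Theorem \ref{t20} gives
\begin{equation*}
\chi_y(M) = \sum_{p \in M^{S^1}} \epsilon(p) \prod_{j=1}^n \frac{1 + y \cdot t^{w_{p,j}}}{1 - t^{w_{p,j}}}.
\end{equation*}
Expanding $\prod_{j=1}^n (1 + y \cdot t^{w_{p,j}}) = \sum_{i=0}^n y^i\, \sigma_i(t^{w_{p,1}},\ldots,t^{w_{p,n}})$ and matching the coefficient of $y^i$ on the two sides of this identity immediately establishes the first equality of the theorem.

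Next, I would identify this expression with $(-1)^i N_i$ by taking the limit $t \to 0^+$ summand by summand. For a positive weight, $\frac{1+yt^w}{1-t^w} \to 1$; for a negative weight, multiplying numerator and denominator by $t^{|w|}$ rewrites the factor as $\frac{t^{|w|}+y}{t^{|w|}-1}$, whose limit at $t=0^+$ is $-y$. Thus each summand has a finite limit equal to $(-y)^{n_p}$, and because the finite sum equals the polynomial $\chi_y(M)$, which does not depend on $t$, we obtain
\begin{equation*}
\chi_y(M) = \sum_{p \in M^{S^1}} \epsilon(p)\,(-y)^{n_p}.
\end{equation*}
Extracting the coefficient of $y^i$ then yields $\chi^i(M) = (-1)^i \sum_{p:\, n_p=i}\epsilon(p) = (-1)^i N_i$.

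The final equality $\chi^i(M) = (-1)^n \chi^{n-i}(M)$ (equivalently $N_i = N_{n-i}$) I would prove by substituting $t \mapsto 1/t$ in the localization expression for $\chi^i(M)$ obtained in the first step. Using
\begin{align*}
\sigma_i(t^{-w_1}, \ldots, t^{-w_n}) &= t^{-\sum_j w_j}\, \sigma_{n-i}(t^{w_1}, \ldots, t^{w_n}), \\
\prod_{j=1}^n (1 - t^{-w_j}) &= (-1)^n\, t^{-\sum_j w_j} \prod_{j=1}^n (1 - t^{w_j}),
\end{align*}
the factor $t^{-\sum_j w_j}$ cancels inside each summand, leaving exactly $(-1)^n$ times the localization expression for $\chi^{n-i}(M)$. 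Combined with $\chi^i(M) = (-1)^i N_i$ and $\chi^{n-i}(M) = (-1)^{n-i} N_{n-i}$, this gives $N_i = N_{n-i}$ and closes the chain of equalities in the statement.

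The main subtlety will be the $t \to 0^+$ limit step: each summand is a priori a rational function of $t$ with potential poles at $t=0$ coming from the negative-weight factors, but after the rewriting $\frac{t^{|w|}+y}{t^{|w|}-1}$ these poles become manifestly removable. Because the total sum equals a polynomial in $y$ independent of $t$, interchanging the finite sum with the limit is legitimate. I expect this to be the only delicate point; everything else is a direct computation with elementary symmetric polynomials.
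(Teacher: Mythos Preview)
Your proof is correct and, for the first two equalities, follows the same line as the paper: set $x=1$ in Theorem~\ref{t20}, read off the coefficient of $y^i$, and then evaluate at $t=0$ after rewriting the negative-weight factors so that the apparent pole becomes removable. You carry this out at the level of the full generating function $\chi_y(M)$, arriving at $\chi_y(M)=\sum_p \epsilon(p)(-y)^{n_p}$ in one stroke; the paper instead works with each $\chi^i(M)$ separately and expands $\prod_j(1-t^{|w_{p,j}|})^{-1}$ as a formal power series, but the content is the same.

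The one genuine difference is your derivation of the symmetry $\chi^i(M)=(-1)^n\chi^{n-i}(M)$. The paper obtains it geometrically, by reversing the circle action (so every weight changes sign, hence the index of each fixed point changes from $n_p$ to $n-n_p$) and then reapplying the first two equalities to the reversed action; this requires the observation that the reversed action still preserves the unitary structure and leaves $\chi_y(M)$ unchanged. Your substitution $t\mapsto 1/t$ together with the elementary identities for $\sigma_i$ and $\prod_j(1-t^{-w_j})$ achieves the same conclusion purely algebraically, using only that the localization formula is an identity of rational functions valid for all $t$. Both arguments are short; yours has the advantage of avoiding any appeal to the geometric behavior of the reversed action.
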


\begin{proof} By Theorem \ref{t20}, the $T_{x,y}$-genus of $M$ is
\begin{center}
$\displaystyle T_{x,y}(M)=\sum_{p \in M^{S^1}} \epsilon(p) \cdot \prod_{i=1}^n \frac{x+y \cdot t^{w_{p,i}}}{1-t^{w_{p,i}}}$
\end{center}
for all indeterminates $t$. Evaluating the $T_{x,y}$-genus of $M$ at $x=1$ and collecting the $y^i$-terms, it follows that
\begin{equation} \label{e0}
\displaystyle \chi^i(M)=\sum_{p \in M^{S^1}} \epsilon(p) \cdot \frac{\sigma_i (t^{w_{p,1}}, \cdots, t^{w_{p,n}})}{\prod_{j=1}^n (1-t^{w_{p,j}})}
\end{equation}
for all indeterminates $t$, for $0 \leq i \leq n$. This proves the first equation.

For the second equation, fix $i$, $0 \leq i \leq n$. In Equation \eqref{e0} we shall do the following: if $a$ is a negative integer, then we manipulate 
\begin{center}
$\displaystyle \frac{1}{1-t^a}=\frac{t^{-a}}{(1-t^a) \cdot t^{-a}}=\frac{t^{-a}}{t^{-a}-1}=-\frac{t^{-a}}{1-t^{|a|}}$.
\end{center}
Rewriting Equation \eqref{e0},
\begin{center}
$\displaystyle \chi^i(M)=\sum_{p \in M^{S^1}} \epsilon(p) \cdot \frac{\sigma_i (t^{w_{p,1}}, \cdots, t^{w_{p,n}})}{\prod_{j=1}^n (1-t^{w_{p,j}})}=\sum_{p \in M^{S^1}} \epsilon(p) \cdot (-1)^{n_p} \frac {[\prod_{w_{p,k}<0} t^{-w_{p,k}}] \cdot \sigma_i (t^{w_{p,1}}, \cdots, t^{w_{p,n}})}{\prod_{j=1}^{n} (1-t^{|w_{p,j}|})}=\sum_{p \in M^{S^1}} \epsilon(p) \cdot (-1)^{n_p} [\prod_{w_{p,k}<0} t^{-w_{p,k}}] \cdot \sigma_i (t^{w_{p,1}}, \cdots, t^{w_{p,n}}) \prod_{j=1}^{n} (1+ t^{|w_{p,j}|} +t^{2|w_{p,j}|} + \cdots)$
\end{center}

Let $p$ be a fixed point of index $i$. Then in the last expression,
\begin{center}
$\epsilon(p) (-1)^{n_p} [\prod_{w_{p,k}<0} t^{-w_{p,k}}] \sigma_i (t^{w_{p,1}}, \cdots, t^{w_{p,n}}) \prod_{j=1}^{n} (1+ t^{|w_{p,j}|} +t^{2|w_{p,j}|} + \cdots)=\epsilon(p) \cdot (-1)^i (1+ f_p(t))$,
\end{center}
where $f_p(t)$ is a formal power series without a constant term.

Let $p$ be a fixed point such that $n_p \neq i$. Then in the last expression,
\begin{center}
$\epsilon(p) (-1)^{n_p} [\prod_{w_{p,k}<0} t^{-w_{p,k}}] \sigma_i (t^{w_{p,1}}, \cdots, t^{w_{p,n}}) \prod_{j=1}^{n} (1+ t^{|w_{p,j}|} +t^{2|w_{p,j}|} + \cdots)$
\end{center}
does not have any constant term. Therefore, taking $t=0$ in Equation \eqref{e0}, it follows that
\begin{center}
$\displaystyle \chi^i(M)=\sum_{p \in M^{S^1}} \epsilon(p) \cdot \frac{\sigma_i (t^{w_{p,1}}, \cdots, t^{w_{p,n}})}{\prod_{j=1}^n (1-t^{w_{p,j}})} = (-1)^i \cdot N_i$.
\end{center}
This proves the second equation.

For the third equation, we reverse the circle action on $M$; let $S^1$ act on $M$ by $g^{-1} \cdot m$ for all $g \in S^1 \subset \mathbb{C}$ and $m \in M$, where $g \cdot m$ denotes the original circle action on $M$. Reversing the circle action preserves the unitary structure, and reverses the sign of every weight at any fixed point. If we apply the first and second equations to the reversed circle action, it follows that 
\begin{center}
$\displaystyle \chi^i(M)=\sum_{p \in M^{S^1}} \epsilon(p) \cdot \frac{\sigma_i (t^{-w_{p,1}}, \cdots, t^{-w_{p,n}})}{\prod_{j=1}^n (1-t^{-w_{p,j}})} = (-1)^i \cdot N_{n-i}$,
\end{center}
for $0 \leq i \leq n$. Therefore, it follows that $N_i=N_{n-i}$ for all $0 \leq i \leq n$. This proves the third equation. Since $\chi^{n-i}(M)=(-1)^{n-i} \cdot N_{n-i}$, the last equation follows. \end{proof}

\begin{rem}
Let $M$ be a compact almost complex manifold. Suppose that the circle acts on $M$ with a non-empty discrete fixed point set. Then $\epsilon(p)=+1$ for every fixed point $p \in M^{S^1}$ if we choose a natural orientation coming from the almost complex structure. Therefore, Theorem \ref{t21} recovers that the number of fixed points of index $i$ is equal to the number of fixed points of index $n-i$ for any $0 \leq i \leq n$, where $\dim M=2n$. Moreover, there exists $0 \leq i \leq n-1$ such that both of $\chi^i(M)$ and $\chi^{i+1}(M)$ are not zero; see \cite{Ko1}, \cite{JT}, \cite{J1}. However, these two properties need not hold for a unitary manifold. For instance, as a unitary manifold, $S^{2n}$ admits a circle action with two fixed points $p$ and $q$ that have the same multisets of weights and $\epsilon(p)=-\epsilon(q)$; see Example \ref{ex3}. Therefore, by Theorem \ref{t21} it follows that $\chi_y(S^{2n})=0$ for this action. On the other hand, as a complex manifold, $S^2$ admits a circle action with 2 fixed points, one of index 0 and the other of index 1. The Hirzebruch $\chi_y$-genus of $S^2$ with the standard complex structure is $\chi_y(S^2)=1-y$. Therefore, the Hirzebruch $\chi_y$-genus of a manifold depends on unitary structures on it.
\end{rem}

More generally, let the circle act on a compact unitary manifold $M$ with fixed points, not necessarily discrete. Let $F$ be a fixed component. By Lemma \ref{l23} below, $F$ is naturally a unitary manifold, and for $p \in F$, the normal space $N_pF$ of $F$ to $M$ at $p$ is a complex vector space that decomposes into the sum of complex 1-dimensional vector spaces
\begin{center}
$N_pF=L_{F,1} \oplus \cdots \oplus L_{F,m},$
\end{center}
and the circle acts on each $L_{F,i}$ by multiplication by $g^{w_{F,i}}$ for some non-zero integer $w_{F,i}$ for all $g \in S^1 \subset \mathbb{C}$, $1 \leq i \leq m$, where $2m$ is the codimension of $F$ in $M$. The integers $w_{F,1}$, $\cdots$, $w_{F,m}$ are the same for all $p \in F$, and called the \textbf{weights} of $F$. Let $d(-,F)$ and $d(+,F)$ be the numbers of negative weights and positive weights in the normal bundle $NF$ of $F$, respectively. The Hirzebruch $\chi_y$-genus of $M$ satisfies the following formula, called the \textbf{Kosniowski formula}:
\begin{center}
$\displaystyle \chi_y(M)=\sum_{F \subset M^{S^1}} (-y)^{d(-,F)} \chi_y(F)=\sum_{F \subset M^{S^1}} (-y)^{d(+,F)} \chi_y(F)$.
\end{center}
The formula was first established for a complex manifold by Kosniowski in \cite{Ko}, and was established for a unitary manifold by Hattori and Taniguchi in \cite{HT}, and later by Kosniowski and Yahia \cite{KY}.

Let an abelian Lie group $G$ act properly on a stable complex manifold $M$; let $J$ be a given complex structure on $TM \oplus \underline{\mathbb{R}}^k$ that $G$ preserves. Let $H$ be a closed subgroup of $G$; then $H$ also acts on $M$. Let $F$ be a connected component of the set $M^H$ of points in $M$ that are fixed by the $H$-action. The $G$-action on $M$ restricts to act on $F$, and the sub-bundle $TF \oplus \underline{\mathbb{R}}^k|_F$ consists of the vectors that are fixed by $H$, and is $G$-equivariant and complex. The normal bundle $NF$ of $F$ is also a complex bundle since it is isomorphic to $(TM \oplus \underline{\mathbb{R}}^k)|_F / (TF \oplus \underline{\mathbb{R}}^k|_F)$.

\begin{lem} \cite{GGK} \label{l23} Let an abelian Lie group $G$ act properly on a stable complex manifold $M$ preserving a given complex structure on $TM \oplus \underline{\mathbb{R}}^k$ for some non-negative integer $k$. Let $H$ be a closed subgroup of $G$. Let $F$ be a connected component of $M^H$. An equivariant stable complex structure on $M$ induces an equivariant stable complex structure on $F$ and a fiberwise complex structure on $NF$ such that $G$ acts on $NF$ by complex bundle automorphisms.
\end{lem}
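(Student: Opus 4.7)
The plan is to analyze the restriction to $F$ of the equivariant complex bundle $E := (TM \oplus \underline{\mathbb{R}}^k, J)$: the $H$-fixed sub-bundle $E^H|_F$ will carry the induced stable complex structure on $F$, while the quotient $E|_F / E^H|_F$ will carry the fiberwise complex structure on the normal bundle $NF$. The key algebraic observation is that $E^H|_F$ is both $J$-invariant and $G$-invariant. Indeed, since $J$ commutes with every element of $H$, if $h \cdot v = v$ for all $h \in H$ then $h \cdot Jv = J(h \cdot v) = Jv$, so $Jv \in E^H|_F$; and since $G$ is abelian, $h \cdot (g \cdot v) = g \cdot (h \cdot v) = g \cdot v$ for any $v \in E^H|_F$ and $g \in G$, so $g \cdot v \in E^H|_F$ as well.

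Next, I would identify $E^H|_F$ explicitly. Under the convention that the group acts trivially on the trivial factor $\underline{\mathbb{R}}^k$, we have $(\underline{\mathbb{R}}^k)^H = \underline{\mathbb{R}}^k$. For the tangent part, the slice theorem for proper actions ensures that $F$ is a smooth submanifold of $M$ with $(TM)^H|_F = TF$. Hence $E^H|_F = TF \oplus \underline{\mathbb{R}}^k|_F$, and restricting $J$ gives a fiberwise complex structure on $TF \oplus \underline{\mathbb{R}}^k$, endowing $F$ with an equivariant stable complex structure (equivariant because the splitting is $G$-invariant).

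For the normal bundle, I would then take the quotient $E|_F / E^H|_F$: it inherits a fiberwise complex structure from $J$ (because $E^H|_F$ is $J$-invariant) together with a $G$-action by complex bundle automorphisms (because $E^H|_F$ is $G$-invariant). Since $(\underline{\mathbb{R}}^k)^H = \underline{\mathbb{R}}^k$, this quotient is canonically isomorphic to $TM|_F / TF = NF$, transporting the complex structure onto the normal bundle as required.

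The main technical obstacle is justifying $(TM)^H|_F = TF$, since $H$ and $G$ are only assumed to be abelian Lie groups acting properly and need not be compact; this rules out the elementary averaging argument and instead requires the slice theorem for proper actions of Palais (building on Koszul) to produce an equivariant tubular neighborhood of $F$ in which the identification becomes transparent. Once that is granted, the rest of the argument is fibrewise linear algebra for the complex $H$-representation on each $E_p$.
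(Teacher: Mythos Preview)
Your proposal is correct and follows essentially the same approach as the paper's own treatment: the paragraph immediately preceding the lemma identifies $TF \oplus \underline{\mathbb{R}}^k|_F$ as the sub-bundle of $H$-fixed vectors (hence $G$-equivariant and complex), and obtains the complex structure on $NF$ via the isomorphism $NF \cong (TM \oplus \underline{\mathbb{R}}^k)|_F / (TF \oplus \underline{\mathbb{R}}^k|_F)$. You supply more detail---the explicit check that $E^H|_F$ is $J$- and $G$-invariant, and the observation that the slice theorem for proper actions is needed to justify $(TM)^H|_F = TF$ in the absence of compactness---but the architecture is the same.
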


Let the circle act on a manifold $M$. The \textbf{equivariant cohomology} of $M$, denoted by $H_{S^1}^\ast(M)$, is
\begin{center}
$\displaystyle H_{S^1}^\ast(M)=H^\ast(M \times_{S^1} ES^1)$,
\end{center}
where $ES^1$ is a contractible space on which the circle acts freely. Suppose that $M$ is compact and oriented. The projection map $\pi:M \times_{S^1} ES^1 \to BS^1$ induces a push-forward map
\begin{center}
$\displaystyle \pi_\ast:H_{S^1}^i(M;\mathbb{Z}) \to H^{i -\dim M}(BS^1;\mathbb{Z})$
\end{center}
for all $i \in \mathbb{Z}$, where $BS^1$ is the classifying space of the circle. We also denote the projection map $\pi_\ast$  by $\int_M$, and the map $\int_M$ is given by integration over the fiber $M$.

\begin{theo} \emph{[The Atiyah-Bott-Berline-Vergne localization theorem]} \cite{AB} \label{t25}
Let the circle $S^1$ act on a compact oriented manifold $M$. Let $ \alpha \in H_{S^1}^{\ast}(M;\mathbb{Q})$. As an element of $\mathbb{Q}(t)$,
\begin{center}
$\displaystyle \int_{M} \alpha = \sum_{F \subset M^{S^1}} \int_{F} \frac{\alpha|_{F}}{e_{S^1}(NF)}$,
\end{center}
where the sum is taken over all fixed components, and $e_{S^1}(NF)$ is the equivariant Euler class of the normal bundle to $F$ in $M$.
\end{theo}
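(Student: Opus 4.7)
The plan is to prove this via the standard localization argument of Borel--Hsiang--Atiyah--Bott. The central ingredient is the \emph{localization theorem} asserting that for a compact $S^1$-space $M$, the restriction map $i^\ast : H^\ast_{S^1}(M;\mathbb{Q}) \to H^\ast_{S^1}(M^{S^1};\mathbb{Q})$ becomes an isomorphism after inverting the generator $t \in H^2(BS^1;\mathbb{Q})$. I would prove this by induction on the cells of an equivariant CW (or orbit-type) decomposition: the relevant equivariant cells have the form $S^1/K \times D^n$ for isotropy subgroups $K \subset S^1$; when $K$ is proper (hence finite), $H^\ast_{S^1}(S^1/K;\mathbb{Q}) \cong H^\ast(BK;\mathbb{Q}) = \mathbb{Q}$ is annihilated by $t$, so after inverting $t$ only the truly fixed cells contribute. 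An equivariant Mayer--Vietoris sequence applied to invariant tubular neighborhoods of $M^{S^1}$ and its complement then forces $i^\ast$ to be an isomorphism over $\mathbb{Q}(t)$.

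Next I would establish the \emph{equivariant Thom isomorphism} and the self-intersection formula: for each fixed component $F \subset M^{S^1}$ of codimension $2m$, there is an equivariant pushforward $(i_F)_\ast : H^\ast_{S^1}(F;\mathbb{Q}) \to H^{\ast+2m}_{S^1}(M;\mathbb{Q})$ satisfying
\begin{center}
$\displaystyle i_F^\ast (i_F)_\ast(\beta) = e_{S^1}(NF) \cdot \beta$
\end{center}
in $H^\ast_{S^1}(F;\mathbb{Q})$. The key point is that, with respect to a splitting of $NF$ into $S^1$-weight line bundles $L_{F,j}$, the equivariant Euler class is
\begin{center}
$\displaystyle e_{S^1}(NF) = \prod_{j=1}^{m} \bigl( c_1(L_{F,j}) + w_{F,j}\, t \bigr),$
\end{center}
whose leading term in $t$ is $\prod_j w_{F,j} \cdot t^m$ with $w_{F,j} \neq 0$. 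Hence $e_{S^1}(NF)$ is invertible after inverting $t$.

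Given $\alpha \in H^\ast_{S^1}(M;\mathbb{Q})$, I would then consider the class
\begin{center}
$\displaystyle \beta := \alpha - \sum_{F \subset M^{S^1}} (i_F)_\ast \left( \frac{\alpha|_F}{e_{S^1}(NF)} \right)$
\end{center}
in localized equivariant cohomology. Restricting to any fixed component $F'$, the self-intersection formula together with the fact that distinct fixed components have disjoint invariant tubular neighborhoods (so the off-diagonal pushforwards restrict to zero) shows that $\beta|_{F'} = \alpha|_{F'} - \alpha|_{F'} = 0$. By the localization theorem, $\beta$ itself vanishes in $H^\ast_{S^1}(M;\mathbb{Q}) \otimes \mathbb{Q}(t)$. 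Applying the $H^\ast(BS^1)$-linear pushforward $\int_M$ and invoking the functoriality $\int_M \circ (i_F)_\ast = \int_F$ then yields the stated formula in $\mathbb{Q}(t)$.

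The hard part is the localization theorem itself: making the cellular reduction rigorous requires either a genuine equivariant CW structure on $M$ (available because a compact smooth $S^1$-manifold admits an equivariant triangulation) or a careful Mayer--Vietoris induction over the stratification of $M$ by orbit types, together with the observation that on any orbit with finite isotropy the equivariant cohomology is $t$-torsion. Once this technical heart is in place, the Thom isomorphism, self-intersection formula, and the final assembly of the integration formula are essentially mechanical.
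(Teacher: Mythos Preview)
The paper does not actually prove this theorem: Theorem~\ref{t25} is quoted from \cite{AB} as a black-box input, with no proof given. So there is no ``paper's own proof'' to compare against.

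That said, your outline is the standard Atiyah--Bott argument and is correct in spirit. A couple of small remarks: you should be a bit careful about the orientation/sign conventions, since the paper applies the formula to unitary manifolds where a sign $\epsilon(p)$ appears at each isolated fixed point (this enters through the identification of $e_{S^1}(NF)$ with the product of weights and the chosen orientation on $T_pM$); and the functoriality $\int_M \circ (i_F)_\ast = \int_F$ should be stated with the appropriate sign coming from the orientation of $NF$ versus that of $M$. But these are refinements, not gaps.
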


\section{Properties of unitary $S^1$-manifolds} \label{s3}

In this section, we prove various results for circle actions on compact unitary manifolds with discrete fixed points sets, generalizing results for almost complex manifolds. For a compact unitary $S^1$-manifold with a discrete fixed point set, the following proposition proves a property that the minimum of the absolute values of weights satisfies, and is a generalization of the result for almost complex $S^1$-manifolds in \cite{JT}, \cite{J2} to unitary $S^1$-manifolds; if $\epsilon(p)=+1$ for all $p \in M^{S^1}$, the proposition below recovers the result for almost complex manifolds. It will play an important role for proving a relationship between weights at the fixed points, Lemma \ref{l33}. The idea of proof is due to Kosniowski \cite{Ko}, while here we must take into account of the sign at each fixed point.

\begin{pro} \label{p31} 
Let the circle act on a $2n$-dimensional compact unitary manifold $M$ with a discrete fixed point set. Let 
\begin{center}
$a=\min\{|w_{p,j}| ; 1 \leq j \leq n, p \in M^{S^1}\}$
\end{center}
be the minimum of the absolute values of weights that occur at any fixed point. For any $i \in \{0,1,\cdots,n-1\}$,
\begin{center}
$\displaystyle \sum_{p \in M^{S^1}, n_p=i} \epsilon(p) \cdot N_p(a)=\sum_{p \in M^{S^1}, n_p=i+1} \epsilon(p) \cdot N_p(-a)$,
\end{center}
where $N_p(w)=|\{j \, | \, w_{p,j}=w, 1 \leq j \leq n\}|$ is the number of times weight $w$ occurs at $p$, for any integer $w$. \end{pro}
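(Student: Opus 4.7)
The plan is to apply Theorem~\ref{t21}, which identifies
$$\chi^i(M) = \sum_{p \in M^{S^1}} \epsilon(p) \cdot \frac{\sigma_i(t^{w_{p,1}}, \ldots, t^{w_{p,n}})}{\prod_{j=1}^n (1-t^{w_{p,j}})}$$
as a quantity independent of $t$, and to extract a nontrivial coefficient on the right-hand side. Following the manipulation in the proof of Theorem~\ref{t21}, rewriting each factor $1/(1-t^{w_{p,j}})$ with $w_{p,j}<0$ via $1/(1-t^{w_{p,j}}) = -t^{-w_{p,j}}/(1-t^{|w_{p,j}|})$ and then expanding each remaining $1/(1-t^{|w_{p,j}|})$ as a geometric series, the right-hand side becomes a formal power series in $t$ with only nonnegative exponents. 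Since the left-hand side is a constant in $t$, every coefficient of $t^k$ with $k \ge 1$ must vanish. I would then read off the identity from the vanishing of the $t^a$-coefficient.

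After the above manipulation, each fixed point $p$ contributes
$$\epsilon(p)\,(-1)^{n_p} \sum_{\substack{S \subseteq \{1,\ldots,n\}\\ |S|=i}} t^{\alpha_p(S)} \prod_{j=1}^n \bigl(1 + t^{|w_{p,j}|} + t^{2|w_{p,j}|} + \cdots\bigr),$$
where
$$\alpha_p(S) = \sum_{k \in S,\, w_{p,k}>0}|w_{p,k}| \;+\; \sum_{k \notin S,\, w_{p,k}<0}|w_{p,k}| \ge 0,$$
and $\alpha_p(S) = 0$ exactly when $S$ equals the set $S_0$ of indices of negative weights at $p$. Extracting the coefficient of $t^a$ splits into two cases. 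If the geometric-series factor contributes $1$, then $\alpha_p(S) = a$; minimality of $a$ rules out ``double swaps'' (two weights of absolute value $\ge a$ would sum to $\ge 2a$), leaving only $S = S_0 \setminus \{k\}$ for some $k$ with $w_{p,k}=-a$ (forcing $n_p = i+1$, contributing $N_p(-a)$ subsets), or $S = S_0 \cup \{k\}$ for some $k$ with $w_{p,k}=a$ (forcing $n_p = i-1$, contributing $N_p(a)$ subsets). If instead the geometric-series factor contributes $t^a$, then necessarily $S = S_0$, forcing $n_p = i$, and the $t^a$ is drawn from one of the $N_p(a) + N_p(-a)$ factors with $|w_{p,j}|=a$.

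Summing these contributions and dividing by $(-1)^i$, the vanishing of the $t^a$-coefficient becomes
$$B_{i+1} + A_{i-1} = A_i + B_i,$$
where $A_j := \sum_{p,\,n_p=j}\epsilon(p) N_p(a)$, $B_j := \sum_{p,\,n_p=j}\epsilon(p) N_p(-a)$, with the convention $A_{-1}=B_{-1}=0$. Since any fixed point with $n_p=0$ has no negative weights we get $B_0 = 0$, so the case $i=0$ yields $B_1 = A_0$; an induction on $i$ using $A_{i-1} = B_i$ (the previous step) in the recursion gives $B_{i+1} = A_i$ for every $i \in \{0,\ldots,n-1\}$, which is the claimed identity.

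The main obstacle is the bookkeeping in the case analysis for the $t^a$-coefficient: one has to track $\epsilon(p)(-1)^{n_p}$ separately across the three index ranges $n_p \in \{i-1,i,i+1\}$, and to confirm that no further subset $S$ contributes to the coefficient of $t^a$, which is precisely where the hypothesis that $a$ is the global minimum of $|w_{p,j}|$ enters. Once this case analysis is verified, the recursion and induction are routine.
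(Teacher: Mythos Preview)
Your proof is correct and follows essentially the same approach as the paper: both start from Theorem~\ref{t21}, rewrite the factors with negative weights to get a power series in $t$, read off the vanishing $t^a$-coefficient to obtain the recursion $A_i+B_i=A_{i-1}+B_{i+1}$ (the paper's Equation~\eqref{e1}), and then induct from $i=0$. The only cosmetic difference is that the paper clears denominators by multiplying through by $B=\prod_p K_p$ before comparing $t^a$-coefficients, whereas you expand each $1/(1-t^{|w_{p,j}|})$ as a geometric series and extract the coefficient directly; the resulting case analysis and recursion are identical.
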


\begin{proof} We first show that
\begin{equation} \label{e1}
\displaystyle \sum_{n_p=i} \epsilon(p) \cdot [N_{p}(a)+N_{p}(-a)]=\sum_{n_p=i-1} \epsilon(p) \cdot N_{p}(a)+\sum_{n_p=i+1} \epsilon(p) \cdot N_{p}(-a),
\end{equation}
for $0 \leq i \leq n$. By Theorem \ref{t21}, for any $0 \leq i \leq n$,
\begin{center}
$\displaystyle \chi^i(M)=\sum_{p \in M^{S^1}} \epsilon(p) \cdot \frac{\sigma_i (t^{w_{p,1}}, \cdots, t^{w_{p,n}})}{\prod_{j=1}^n (1-t^{w_{p,j}})}$
\end{center}
for all indeterminates $t$, where $\sigma_i$ is the $i$-th elementary symmetric polynomial in $n$ variables. We manipulate the equation above to the following equation
\begin{equation} \label{e2}
\displaystyle \chi^i(M)=\sum_{p \in M^{S^1}} \epsilon(p) \cdot (-1)^{n_p} \frac {[\prod_{w_{p,k}<0} t^{-w_{p,k}}] \cdot \sigma_i (t^{w_{p,1}}, \cdots, t^{w_{p,n}})}{\prod_{j=1}^{n} (1-t^{|w_{p,j}|})}.
\end{equation}

For each fixed point $p$, let 
\begin{center}
$\displaystyle J_p=[\prod_{w_{p,k}<0} t^{-w_{p,k}}] \cdot \sigma_i (t^{w_{p,1}}, \cdots, t^{w_{p,n}})$ and $\displaystyle K_p=\prod_{j=1}^{n} (1-t^{|w_{p,j}|})$.
\end{center}
Also, let $\displaystyle A=\sum_{p \in M^{S^1}} (N_p(a)+N_p(-a))$ and $\displaystyle B=\prod_{p \in M^{S^1}} K_p$. Then we can rewrite Equation \eqref{e2} as
\begin{center}
$\displaystyle \chi^i(M)=\sum_{p \in M^{S^1}} \epsilon(p) \cdot (-1)^{n_p} \frac{J_p}{K_p}$
\end{center}
Moreover,
\begin{center}
$\displaystyle B=\prod_{p \in M^{S^1}} K_p=1-At^a+g_1(t)$, 
\end{center}
where $g_1(t)$ is a formal power series that does not have a constant term and $t^a$-term.

Let $p$ be a fixed point of index $i$. Then $J_p=1+f_p(t)$, where $f_p(t)$ is a formal power series that does not have a constant term and $t^a$-term. Therefore, 
\begin{center}
$J_p \cdot B / K_p=1-(A-N_p(a)-N_p(-a))t^a+g_p(t)$, 
\end{center}
where $g_p(t)$ is a formal power series without a constant term and $t^a$-term.

Let $p$ be a fixed point of index $i+1$. Then $J_p=N_{p}(- a) \cdot t^a+f_p(t)$, where $f_p(t)$ is a formal power series that does not have a constant term and $t^a$-term. Therefore, $J_p \cdot B/K_p=N_p (- a)t^a+g_p(t)$, where $g_p(t)$ is a formal power series without a constant term and $t^a$-term.

Let $p$ be a fixed point of index $i-1$. Then $J_p=N_{p}(a) \cdot t^a+f_p(t)$, where $f_p(t)$ is a formal power series that does not have a constant term and $t^a$-term. Therefore, $J_p \cdot B/K_p=N_p (a)t^a+g_p(t)$, where $g_p(t)$ is a formal power series without a constant term and $t^a$-term.

Let $p$ be a fixed point of index $j$ such that $j \neq i, i \pm 1$. Then $J_p=f_p(t)$, where $f_p(t)$ is a formal power series that does not have a constant term and $t^a$-term. Therefore, $J_p \cdot B/K_p=g_p(t)$, where $g_p(t)$ is a formal power series without a constant term and $t^a$-term.

We multiply Equation \eqref{e2} by $B$ to have
\begin{center}
$\displaystyle \chi^i(M)[1-At^a+g_1(t)]=(-1)^i \sum_{n_p=i} \epsilon(p) + \{(-1)^i \sum_{n_p=i} \epsilon(p) \cdot (N_p(a)+N_p(-a)-A)+ (-1)^{i-1} \sum_{n_p=i-1} \epsilon(p) \cdot  N_p(a)+(-1)^{i+1} \sum_{n_p=i+1} \epsilon(p) \cdot N_p(-a)\}t^a + g_2(t)$,
\end{center}
where $g_2(t)$ is a formal power series that does not have a constant term and $t^a$-term. Since constant terms match, $\chi^i(M)=(-1)^i \cdot \sum_{n_p=i} \epsilon(p)$. With this, comparing the coefficients of $t^a$-terms in the equation above, Equation \eqref{e1} holds.

Applying Equation \eqref{e1} for $i=0$, we have
\begin{center}
$\displaystyle \sum_{p \in M^{S^1}, n_p=0} \epsilon(p) \cdot N_p(a)=\sum_{p \in M^{S^1}, n_p=1} \epsilon(p) \cdot N_p(-a)$.
\end{center}

Next, applying \eqref{e1} for $i=1$, we have
\begin{center}
$\displaystyle \sum_{n_p=1} \epsilon(p) \cdot [N_{p}(a)+N_{p}(-a)]=\sum_{n_p=0} \epsilon(p) \cdot N_{p}(a)+\sum_{n_p=2} \epsilon(p) \cdot N_{p}(-a).$
\end{center}
Since $\displaystyle \sum_{p \in M^{S^1}, n_p=0} \epsilon(p) \cdot N_p(a)=\sum_{p \in M^{S^1}, n_p=1} \epsilon(p) \cdot N_p(-a)$, it follows that
\begin{center}
$\displaystyle \sum_{p \in M^{S^1}, n_p=1} \epsilon(p) \cdot N_p(a)=\sum_{p \in M^{S^1}, n_p=2} \epsilon(p) \cdot N_p(-a)$.
\end{center}
Continuing this, the proposition follows. \end{proof}

Let $M$ be a compact unitary $S^1$-manifold. For a positive integer $w$, consider an action of the group $\mathbb{Z}_w$ on $M$, where $\mathbb{Z}_w$ acts on $M$ as a subgroup of $S^1$. Then the set $M^{\mathbb{Z}_w}$ of points in $M$ that are fixed by the $\mathbb{Z}_w$-action is also a compact unitary $S^1$-manifold.

\begin{lem} \label{l32}
Let the circle act on a compact unitary manifold $M$ with a discrete fixed point set; let $J$ be a complex structure on $TM \oplus \underline{\mathbb{R}}^k$ for some non-negative integer $k$. Let $w$ be a positive integer. Let $F$ be a connected component of $M^{\mathbb{Z}_w}$, where $\mathbb{Z}_w$ acts on $M$ as a subgroup of $S^1$. Then the circle action on $M$ restricts to act on $F$ so that $F$ is naturally a unitary $S^1$-manifold; there is a complex structure $J_1$ on $TF \oplus \underline{\mathbb{R}}^k|_F$ preserved by the circle action. Moreover, $\epsilon_M(p)=\epsilon_F(p)$ for any $p \in F \cap M^{S^1}$.
\end{lem}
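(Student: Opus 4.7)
The plan is to treat the two assertions separately. For the first, since $\mathbb{Z}_w$ is a subgroup of the abelian group $S^1$, the circle action commutes with the $\mathbb{Z}_w$-action, so $S^1$ preserves $M^{\mathbb{Z}_w}$ and hence the connected component $F$. Then I would invoke Lemma \ref{l23} with $G = S^1$ and $H = \mathbb{Z}_w$: because the circle acts trivially on $\underline{\mathbb{R}}^k$, the $\mathbb{Z}_w$-fixed sub-bundle of $(TM \oplus \underline{\mathbb{R}}^k)|_F$ is precisely $TF \oplus \underline{\mathbb{R}}^k|_F$, and $J$ restricts to an $S^1$-invariant fiberwise complex structure $J_1$ on it.

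For the sign equality, fix $p \in F \cap M^{S^1}$. My strategy is to show that both $\epsilon_M(p)$ and $\epsilon_F(p)$ equal the same sign $\sigma$, namely the sign comparing the standard orientation on $\{0\} \oplus \mathbb{R}^k \subset T_pM \oplus \mathbb{R}^k$ with the orientation induced by $J$. The structural input is that since $p$ is an isolated $S^1$-fixed point, the trivial isotypic component of $T_pM \oplus \mathbb{R}^k$ is exactly $\{0\} \oplus \mathbb{R}^k$, and a short infinitesimal argument using that $J$ commutes with $S^1$ shows that every non-trivial $S^1$-weight subspace lies inside $T_pM \oplus \{0\}$. Therefore
\[
T_pM \oplus \mathbb{R}^k \;=\; (T_pM \oplus \{0\}) \,\oplus\, (\{0\} \oplus \mathbb{R}^k)
\]
is a $J$-invariant decomposition into complex subspaces, and the complex structure on $T_pM$ furnished by Lemma \ref{l23} agrees with $J|_{T_pM \oplus \{0\}}$. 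Comparing orientations on the two sides of this decomposition, and using that by definition the first orientation on $T_pM$ wedged with the standard orientation on $\mathbb{R}^k$ equals the $J$-orientation on the total space, I conclude $\epsilon_M(p) = \sigma$.

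The identical argument applied to $F$ in place of $M$ (noting that $F^{S^1} = F \cap M^{S^1}$ is discrete, so $p$ is an isolated $S^1$-fixed point of $F$) yields $\epsilon_F(p) = \tau$, where $\tau$ is defined analogously using $J_1$. Since $J_1$ is the restriction of $J$ to $TF \oplus \underline{\mathbb{R}}^k|_F$, the induced complex orientations on the common subspace $\{0\} \oplus \mathbb{R}^k$ agree, so $\sigma = \tau$ and $\epsilon_M(p) = \epsilon_F(p)$. The main obstacle is the infinitesimal step showing that every non-trivial isotypic component of $T_pM \oplus \mathbb{R}^k$ sits inside $T_pM \oplus \{0\}$; once this is established the orientation bookkeeping is routine because $\dim_{\mathbb{R}} T_pM$ and $k$ are both even (the latter forced by the existence of the complex structure $J$ on $TM \oplus \underline{\mathbb{R}}^k$), so reordering factors in the wedge introduces no spurious signs.
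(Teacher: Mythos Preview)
Your argument is correct but takes a different route from the paper for the sign equality. Both proofs invoke Lemma~\ref{l23} (with $G=S^1$, $H=\mathbb{Z}_w$) for the first assertion. For $\epsilon_M(p)=\epsilon_F(p)$, the paper works with the tangent/normal splitting $T_pM = T_pF \oplus N_pF$: since $N_pF$ carries the complex structure $J_2$ from Lemma~\ref{l23}, its $J_2$-orientation contributes the same factor to both the ``first'' orientation of $T_pM$ (via the orientation of $M$, which the paper notes equals the orientation of $F$ times the $J_2$-orientation of $NF$) and the ``second'' orientation of $T_pM$ (via the complex structure on $T_pM$, which restricts to $J_2$ on $N_pF$). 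Cancelling this common factor yields $\epsilon_M(p)=\epsilon_F(p)$ directly.

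Your approach instead climbs up to $T_pM\oplus\mathbb{R}^k$ and uses that isolation of $p$ forces the $J$-invariant splitting $(T_pM\oplus\{0\})\oplus(\{0\}\oplus\mathbb{R}^k)$, giving the explicit formula $\epsilon_M(p)=\sigma$ where $\sigma$ compares the standard and $J$-induced orientations on $\mathbb{R}^k$; since $J_1|_{\mathbb{R}^k}=J|_{\mathbb{R}^k}$, the same $\sigma$ computes $\epsilon_F(p)$. This is a bit more hands-on but buys you something the paper's proof does not state explicitly: a concrete description of $\epsilon(p)$ as a quantity depending only on $J|_{\{0\}\oplus\mathbb{R}^k}$ at $p$, independent of the weights. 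The paper's argument is shorter and more conceptual, leaning on the natural geometry of the normal bundle rather than the isotypic decomposition.
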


\begin{proof}
By Lemma \ref{l23}, there are natural complex structures $J_1$ and $J_2$ on $TF \oplus \underline{\mathbb{R}}^k|_F$ and $NF$, so that $G$ acts on $F$ preserving the complex structure $J_1$ on $TF \oplus \underline{\mathbb{R}}^k|_F$, and $G$ acts on $NF$ preserving $J_2$. This proves the first claim. Moreover, the unitary structure on $F$ gives an orientation of $F$ and the complex structure $J_2$ gives an orientation on $NF$; these orientations together agree with the orientation of $M$.

Let $p \in F \cap M^{S^1}$. Since $(TM \oplus \underline{\mathbb{R}}^k)|_F = TF \oplus NF \oplus \underline{\mathbb{R}}^k|_F$, the agreement $\epsilon_M(p)$ of the orientation on $T_pM$ coming from the orientation of $M$ and the orientation on $T_pM$ coming from the complex structure on $T_pM$ is the same as the agreement $\epsilon_F(p)$ of the orientation on $T_pF$ coming from the orientation of $F$ and the orientation on $T_pF$ coming from the complex structure on $T_pF$. \end{proof}

For a compact almost complex $S^1$-manifold with a discrete fixed point set, in \cite{H} Hattori proved that for any integer $w$, the number of times weight $w$ occurs over all fixed points, counted with multiplicity, is equal to the number of times weight $-w$ occurs over all fixed points, counted with multiplicity. Moreover, Hattori proved that for a compact unitary $S^1$-manifold with a discrete fixed point set, for any positive integer $w$,
\begin{equation} \label{eq4}
\displaystyle \sum_{p \in M^{S^1}} (N_p(w)+N_p(-w)) \equiv 0 \mod 2
\end{equation}
where $N_p(w)=|\{j \, | \, w_{p,j}=w, 1 \leq j \leq n\}|$. Applying Proposition \ref{p31}, we prove the following lemma, which proves a stronger result for unitary $S^1$-manifolds than
Equation \eqref{eq4} and which generalizes the result for almost complex manifolds by Hattori \cite{H} mentioned above. For a unitary $S^1$-manifold, when a weight $w$ occurs, $-w$ need not occur as a weight; see Example \ref{ex3}.

\begin{lem} \label{l33}
Let the circle act on a compact unitary manifold $M$ with a discrete fixed point set. For any integer $w$,
\begin{center}
$\displaystyle \sum_{p \in M^{S^{1}}} \epsilon(p)  \cdot N_{p}(w)=\sum_{p \in M^{S^{1}}} \epsilon(p) \cdot N_{p}(-w)$,
\end{center}
where $N_{p}(w)=|\{j \, | \, w_{p,j}=w, 1 \leq j \leq n\}|$ is the number of times $w$ occurs as weights at $p$. \end{lem}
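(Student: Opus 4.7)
My plan is to reduce Lemma \ref{l33} to the case of the minimum weight, which is handled by Proposition \ref{p31}, by passing to the fixed locus of the cyclic subgroup $\mathbb{Z}_w \subset S^1$ and applying the proposition component-wise.

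First, if $w = 0$ both sides vanish, and after possibly reversing the circle action I may assume $w > 0$. I then examine the set $M^{\mathbb{Z}_w}$, where $\mathbb{Z}_w$ acts as a subgroup of $S^1$. By Lemma \ref{l23} every connected component $F$ of $M^{\mathbb{Z}_w}$ is naturally a unitary $S^1$-manifold, and $F^{S^1} = F \cap M^{S^1}$ is finite. Lemma \ref{l32} then gives $\epsilon_M(p) = \epsilon_F(p)$ for every $p \in F^{S^1}$. The key geometric fact I will use is that the $S^1$-weights on $T_pF$ at such a $p$ are exactly those weights $w_{p,j}$ on $T_pM$ that are divisible by $w$; in particular, since $\pm w$ is itself divisible by $w$, the count $N_p(\pm w)$ (defined via $T_pM$) coincides with the analogous count $N_p^F(\pm w)$ taken within $F$.

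Now I fix a component $F$ with $F^{S^1} \neq \emptyset$ and let $a_F$ denote the minimum absolute value of weights appearing on $F$. Since all weights on $F$ are multiples of $w$, we have $a_F \geq w$. If $a_F > w$, then $N_p^F(\pm w) = 0$ for every $p \in F^{S^1}$, and the identity I want on $F$ is the trivial $0 = 0$. If $a_F = w$, I apply Proposition \ref{p31} to $F$ with $a = w$, and sum the resulting equations over $i = 0, 1, \ldots, \tfrac{1}{2}\dim F - 1$. The telescoping omits the index $\tfrac{1}{2}\dim F$ on the left and the index $0$ on the right; in both cases the corresponding fixed points have all weights of one sign, so $N_p^F(w) = 0$ or $N_p^F(-w) = 0$ there. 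This yields
\[
\sum_{p \in F^{S^1}} \epsilon(p) \cdot N_p^F(w) \;=\; \sum_{p \in F^{S^1}} \epsilon(p) \cdot N_p^F(-w).
\]

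Finally, I sum over all components $F$ of $M^{\mathbb{Z}_w}$. Since $M^{S^1} \subset M^{\mathbb{Z}_w}$, every $S^1$-fixed point of $M$ lies in exactly one such $F$, and substituting $\epsilon_M(p) = \epsilon_F(p)$ and $N_p^F(\pm w) = N_p(\pm w)$ produces the desired equality. I expect the main subtlety to be the careful identification of weights, signs, and counts between $M$ and its $\mathbb{Z}_w$-fixed submanifolds; once Lemmas \ref{l23} and \ref{l32} are in hand this bookkeeping is routine, and the telescoping reduction itself is entirely straightforward.
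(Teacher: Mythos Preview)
Your proof is correct and follows essentially the same route as the paper: pass to a component $F$ of $M^{\mathbb{Z}_w}$, invoke Lemma~\ref{l32} to identify signs, apply Proposition~\ref{p31} to $F$, and sum over indices and components. You are in fact slightly more careful than the paper in explicitly treating the cases $w=0$ and $a_F>w$ (the paper simply asserts that $w$ is the smallest positive weight on $F$, which is harmless since otherwise both sides vanish on $F$), and in spelling out why the omitted index terms contribute zero.
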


\begin{proof}
Let $w$ be a positive integer. The group $\mathbb{Z}_w$ acts on $M$ as a subgroup of $S^1$. Let $F$ be a connected component of $M^{\mathbb{Z}_w}$ such that $F \cap M^{S^1} \neq \emptyset$. By Lemma \ref{l32}, the circle action on $M$ restricts to act on $F$ so that $F$ is naturally a unitary $S^1$-manifold; there is a complex structure $J_1$ on $TF \oplus \underline{\mathbb{R}}^k|_F$ preserved by the circle action. Moreover, $\epsilon_M(p)=\epsilon_F(p)$ for all $p \in F \cap M^{S^1}$. Let $p$ be a fixed point of the circle action on $F$. Then the weights at $p$ in $T_pF$ are multiples of $w$, and hence $w$ is the smallest positive weight for the circle action on $F$. Let $\dim F=2m$. For any $0 \leq i \leq m-1$, applying Proposition \ref{p31} to the circle action on $F$, 
\begin{center}
$\displaystyle \sum_{p \in F^{S^1}, n_p^F=i} \epsilon_F(p) \cdot N_p^F(w)=\sum_{p \in F^{S^1}, n_p^F=i+1} \epsilon_F(p) \cdot N_p^F(-w)$.
\end{center}
Here, $N_{p}^F(w)=|\{j \, | \, w_{p,j}=w, w_{p,j}\textrm{ is a weight in }T_pF\}|$ is the number of times $w$ occurs in $T_pF$ and $n_p^F$ is the number of negative weights in $T_pF$ for all $p \in F^C=F \cap M^{S^1}$. Summing over all $0 \leq i \leq m-1$, it follows that
\begin{center}
$\displaystyle \sum_{p \in F^{S^1}} \epsilon_F(p) \cdot N_p^F(w)=\sum_{p \in F^{S^1}} \epsilon_F(p) \cdot N_p^F(-w)$.
\end{center}
For any $p \in F^{S^1}$, a weight in $T_pM$ is in $T_pF$ if and only if it is divisible by $w$ and hence $N_p^F(w)=N_p(w)$ and $N_p^F(-w)=N_p(-w)$. With that $\epsilon_M(p)=\epsilon_F(p)$ for all $p \in F \cap M^{S^1}=F^{S^1}$, we have
\begin{center}
$\displaystyle \sum_{p \in F^{S^1}} \epsilon_M(p) \cdot N_p(w)=\sum_{p \in F^{S^1}} \epsilon_M(p) \cdot N_p(-w)$.
\end{center}
Since this holds for any positive integer $w$ and any connected component of $M^{\mathbb{Z}_w}$, the lemma holds. \end{proof}

By Lemma \ref{l33}, it follows that for a compact unitary $S^1$-manifold with a discrete fixed point set, the total number of weights over all fixed points must be even. Thus, we obtain the following corollary; Pelayo and Tolman proved an analogous result for symplectic $S^1$-actions on symplectic manifolds \cite{PT}.

\begin{cor} \label{c34} 
Let the circle act on a $2n$-dimensional compact unitary manifold with $k$ fixed points. If $k$ is odd, then $n$ is even.
\end{cor}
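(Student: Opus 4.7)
The plan is to extract a parity statement by combining Lemma \ref{l33} with the fact that the number of weights at each fixed point is exactly $n$. The key point is that Lemma \ref{l33} equates weighted counts of $w$ and $-w$ for every integer $w$; since the total count over all nonzero $w$ is controlled, summing yields a relation modulo $2$ between $n$, $k$, and the signs.

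First I would sum Lemma \ref{l33} over all positive integers $w$. Writing $S := \sum_{p \in M^{S^1}} \epsilon(p)$ and using that each fixed point contributes exactly $n$ weights, one obtains
\begin{equation*}
\sum_{w > 0} \sum_{p \in M^{S^1}} \epsilon(p) \cdot N_p(w) \;=\; \sum_{w > 0} \sum_{p \in M^{S^1}} \epsilon(p) \cdot N_p(-w).
\end{equation*}
The right-hand side equals $\sum_{w < 0} \sum_p \epsilon(p) \cdot N_p(w)$, so adding the left-hand side to both sides gives
\begin{equation*}
2 \sum_{w > 0} \sum_{p \in M^{S^1}} \epsilon(p) \cdot N_p(w) \;=\; \sum_{w \neq 0} \sum_{p \in M^{S^1}} \epsilon(p) \cdot N_p(w) \;=\; \sum_{p \in M^{S^1}} \epsilon(p) \cdot n \;=\; n \cdot S,
\end{equation*}
where in the middle step I interchange the order of summation and use $\sum_{w \neq 0} N_p(w) = n$ for every fixed point $p$.

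Second, I would read off the parity consequence: the displayed identity shows that $n \cdot S$ is an even integer. Since $\epsilon(p) \in \{+1,-1\}$ for each of the $k$ fixed points, $S$ has the same parity as $k$. Hence if $k$ is odd, then $S$ is odd, which forces $n$ to be even. This completes the argument.

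I do not expect any serious obstacle here, since Lemma \ref{l33} already supplies the needed weight symmetry; the only subtlety is the bookkeeping that turns a family of identities (one per $w$) into a single parity statement, which is handled by summing over $w > 0$ and using the finiteness of the weight set (only finitely many $w$ contribute nonzero terms, since $M^{S^1}$ is finite and each $p$ has only $n$ weights).
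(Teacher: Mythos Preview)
Your proof is correct and follows essentially the same approach as the paper: both deduce the corollary by summing the identity of Lemma~\ref{l33} over all positive $w$ and reading off a parity constraint. The paper phrases this as ``the total number of weights over all fixed points must be even'' (i.e.\ $nk$ is even), obtained by first reducing Lemma~\ref{l33} modulo~$2$ to get $\sum_p (N_p(w)+N_p(-w))\equiv 0\pmod 2$ for each $w$ and then summing; you instead sum first and obtain $nS$ even with $S\equiv k\pmod 2$, which is the same conclusion by a slightly different (and equally valid) bookkeeping.
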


For a symplectic $S^1$-action on a symplectic manifold $M$, Tolman \cite{T} proved a relationship between weights at fixed points that lie in the same connected component of $M^{\mathbb{Z}_w}$; if $p$ and $p'$ are in the same component of $M^{\mathbb{Z}_w}$, then the weights at $p$ and at $p'$ are equal modulo $w$. The proof naturally extends to almost complex $S^1$-manifolds, and Godinho and Sabatini stated without a proof in \cite{GS}. The same argument holds for unitary $S^1$-manifolds.

\begin{lem} \label{l35} 
Let the circle act on a compact unitary manifold $M$. Let $p$ and $p'$ be fixed points that lie in the same component $F$ of $M^{\mathbb{Z}_w}$, for some positive integer $w$. Then the $S^{1}$-weights at $p$ and at $p'$ are equal modulo $w$.
\end{lem}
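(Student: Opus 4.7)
The plan is to compare the two $S^1$-representations $T_pM$ and $T_{p'}M$ by splitting them along $F$ and exploiting the residual $\mathbb{Z}_w$-action. By Lemma~\ref{l32} the circle action restricts to $F$, so $F$ inherits a unitary $S^1$-structure, and Lemma~\ref{l23} provides a fiberwise complex structure on the normal bundle $NF$ preserved by $S^1$. I will work with the splitting $T_qM = T_qF \oplus N_qF$ of $S^1$-representations for any $S^1$-fixed point $q \in F$.

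First I would handle the tangential part: since $\mathbb{Z}_w$ acts trivially on $F$, it acts trivially on $T_qF$ for every $q \in F$, so every $S^1$-weight occurring on $T_qF$ must be divisible by $w$. Thus all weights in the tangential directions are $\equiv 0 \pmod{w}$, and there are $m := \tfrac{1}{2}\dim F$ of them at each $q \in F^{S^1}$.

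Next I would decompose the normal bundle into its $\mathbb{Z}_w$-isotypic components
\[
NF \;=\; \bigoplus_{a=1}^{w-1} (NF)_a,
\]
where $(NF)_a$ is the complex subbundle on which a fixed generator of $\mathbb{Z}_w$ acts by the character $\zeta \mapsto \zeta^a$ for a primitive $w$-th root of unity $\zeta$. Because the $S^1$-action commutes with $\mathbb{Z}_w \subset S^1$ and is complex linear on $NF$, each $(NF)_a$ is an $S^1$-equivariant complex subbundle; because $F$ is connected, its complex rank $r_a$ is constant on $F$. At any $q \in F^{S^1}$, the fiber $(NF)_a|_q$ is a complex $S^1$-representation on which $\mathbb{Z}_w$ still acts by $\zeta^a$, which forces every $S^1$-weight on that fiber to be $\equiv a \pmod{w}$; and this fiber has complex dimension $r_a$.

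Combining these observations, the multiset of residues modulo $w$ of the $S^1$-weights at any $S^1$-fixed point $q \in F$ is the same: it consists of $m$ zeros and, for each $a \in \{1,\dots,w-1\}$, exactly $r_a$ copies of $a$. Applying this to $q = p$ and $q = p'$ proves the lemma. The only step that requires a little care is verifying the isotypic splitting of $NF$ into constant-rank $S^1$-equivariant complex subbundles; this is a standard consequence of $F$ being connected and of the two actions on $NF$ being complex linear and commuting, so I do not anticipate a genuine obstacle.
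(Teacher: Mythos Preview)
Your argument is correct. The paper itself does not spell out a proof of this lemma; it only remarks that Tolman's argument for symplectic manifolds carries over to the unitary setting, and your proof---splitting $T_qM$ along $F$ and using the $\mathbb{Z}_w$-isotypic decomposition of $NF$ into constant-rank $S^1$-equivariant complex subbundles---is exactly that standard argument.
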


Lemma \ref{l35} means that if $p$ and $p'$ are in the same component of $M^{\mathbb{Z}_w}$, there exists a bijection $\pi:\{1,\cdots,n\} \to \{1,\cdots, n\}$ so that $w_{p,i} \equiv w_{p', \pi(i)} \mod w$ for $i \in \{1,\cdots,n\}$. We can check this lemma with examples, for instance Examples \ref{ex1}, \ref{ex2}, and \ref{ex3} below.

Consider a circle action on a compact unitary manifold $M$ with a discrete fixed point set. The \textbf{Chern class map} is a map $c_1(M) : M^{S^1} \to \mathbb{Z}$ that assigns each fixed point $p$ the first equivariant Chern class $c_1(M)(p)$ at $p$, ignoring a generator $x$ of $H^*(\mathbb{CP}^\infty)$. For any fixed point $p \in M^{S^1}$, $c_1(M)(p)$ is equal to the sum of the weights at $p$ (times $x$). In \cite{H}, Hattori also proved that for a compact almost complex $S^1$-manifold with a discrete fixed point set, the sum of the first equivariant Chern class at the fixed points vanishes. That is,
\begin{center}
$\displaystyle \sum_{p \in M^{S^1}} c_1(M)(p)=0$.
\end{center}
We generalize this to compact unitary $S^1$-manifolds as follows.

\begin{cor} \label{c36} 
If the circle acts on a compact unitary manifold $M$ with a discrete fixed point set, then 
\begin{center}
$\displaystyle \sum_{p \in M^{S^1}} \epsilon(p) \cdot c_1(M)(p)=0$,
\end{center}
where $c_1(M)(p)$ is the first equivariant Chern class of $M$ at $p$. \end{cor}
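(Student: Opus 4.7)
The plan is to reduce the statement to Lemma \ref{l33} by expanding $c_1(M)(p)$ as a sum of weights and then regrouping the double sum according to weight values.

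First I would recall that for a fixed point $p$, the first equivariant Chern class satisfies $c_1(M)(p) = \sum_{j=1}^n w_{p,j}$. Using the counting function $N_p(w) = |\{j : w_{p,j} = w\}|$ from Lemma \ref{l33}, this rewrites as
\begin{equation*}
c_1(M)(p) = \sum_{w \in \mathbb{Z} \setminus \{0\}} w \cdot N_p(w).
\end{equation*}
Since only finitely many weights occur over all fixed points, the sum over $w$ has finite support, so interchanging sums is harmless.

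Next I would swap the order of summation:
\begin{equation*}
\sum_{p \in M^{S^1}} \epsilon(p) \cdot c_1(M)(p) = \sum_{w \in \mathbb{Z} \setminus \{0\}} w \cdot \Bigl( \sum_{p \in M^{S^1}} \epsilon(p) \cdot N_p(w) \Bigr).
\end{equation*}
Apply Lemma \ref{l33} to the inner sum: $\sum_p \epsilon(p) N_p(w) = \sum_p \epsilon(p) N_p(-w)$ for every integer $w$. Denoting the common value by $S(w)$, we have $S(w) = S(-w)$, so the function $w \mapsto w \cdot S(w)$ is odd. Summing an odd function of $w$ over $\mathbb{Z} \setminus \{0\}$ gives $0$, which yields the conclusion.

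There is no real obstacle — the whole content of the corollary is packaged into Lemma \ref{l33}, and the only thing to verify is the identification $c_1(M)(p) = \sum_j w_{p,j}$, which follows from the standard fact that the equivariant first Chern class of a sum of equivariant complex line bundles at a fixed point is the sum of the weights (and the decomposition $T_pM = \bigoplus_i L_{p,i}$ recalled in Section \ref{s2}). The sign factor $\epsilon(p)$ propagates trivially through the rearrangement because it depends only on $p$ and not on the weight index being summed.
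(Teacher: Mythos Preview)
Your proof is correct and is essentially the same as the paper's: both expand $c_1(M)(p)$ as $\sum_j w_{p,j}$, regroup by weight value, and use Lemma \ref{l33} to cancel the contributions of $w$ and $-w$. The only cosmetic difference is that the paper pairs $w$ with $-w$ by summing over $w\in\mathbb{N}$, whereas you sum over $w\in\mathbb{Z}\setminus\{0\}$ and invoke oddness of $w\cdot S(w)$.
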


\begin{proof}
For each integer $w$, by Lemma \ref{l33},
\begin{center}
$\displaystyle \sum_{p \in M^{S^1}} \epsilon(p) \cdot N_p(w) \cdot w + \sum_{p \in M^{S^1}} \epsilon(p) \cdot N_p(-w)\cdot (-w)=0$,
\end{center}
where $N_p(w)=|\{j \, | \, w_{p,j}=w, 1 \leq j \leq n\}|$ is the number of times weight $w$ occurs at $p$. Therefore,
\begin{center}
$\displaystyle \sum_{p \in M^{S^1}} \epsilon(p) \cdot c_1(M)(p)=\sum_{p \in M^{S^1}} \epsilon(p) (w_{p,1}+\cdots+w_{p,n})=\sum_{p \in M^{S^1}} \epsilon(p) \cdot [\sum_{w \in \mathbb{N}} (N_p(w) \cdot w + N_p(-w) \cdot (-w))]=\sum_{w \in \mathbb{N}} [\sum_{p \in M^{S^1}} \epsilon(p) \cdot N_p(w) \cdot w + \sum_{p \in M^{S^1}} \epsilon(p) \cdot N_p(-w) \cdot (-w)]=\sum_{w \in \mathbb{N}}0=0$.
\end{center}
This proves the corollary. \end{proof}

In \cite{PT}, Pelayo and Tolman derived some result for a symplectic $S^1$-manifold under an assumption on the cardinality of the set $\{c_1(M)(p) \, | \, p \in M^{S^1}\}$. The result naturally extends to almost complex $S^1$-manifolds, and the author announced it in \cite{J1} without a proof. The result also extends to a unitary manifold, if in the proof of \cite{PT} we take $\epsilon(p)$ into account, for all $p \in M^{S^1}$.

\begin{lem} \label{l37}
Let the circle act on a $2n$-dimensional compact unitary manifold $M$ with a discrete fixed point set. If the set $\{c_1(M)(p) \, | \, p \in M^{S^1}\}$ contains at most $n$ elements, then
\begin{center}
$\displaystyle \sum_{p \in M^{S^1}, \, c_1(M)(p)=k} \epsilon(p) \cdot \frac{1}{\prod_{i=1}^n w_{p,i}}=0$ for all $k \in \mathbb{Z}$.
\end{center}
\end{lem}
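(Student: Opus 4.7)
The plan is to imitate the argument of Pelayo--Tolman \cite{PT} by applying the Atiyah--Bott--Berline--Vergne localization theorem (Theorem \ref{t25}) to a carefully chosen equivariant cohomology class of sub-top degree, and then to exploit the vanishing of a product of the form $\prod_j(c_1(M)(p)-a_j)$ at all but the desired fixed points. The only new ingredient compared with the almost complex case is to keep track of the sign $\epsilon(p)$, which enters the localization formula at each fixed point because the orientation on $T_pM$ induced from the unitary structure on $M$ may disagree with the one induced from the complex structure on $T_pM$.

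Fix $k\in\mathbb{Z}$. If no fixed point $p$ satisfies $c_1(M)(p)=k$, the claimed sum is empty and there is nothing to prove, so assume otherwise. By hypothesis $|\{c_1(M)(p):p\in M^{S^1}\}|\le n$, so the complement $\{c_1(M)(p):p\in M^{S^1}\}\setminus\{k\}$ has at most $n-1$ elements; choose integers $a_1,\dots,a_{n-1}$, all distinct from $k$, whose collection contains this complement. Let $x\in H^2(BS^1;\mathbb{Z})$ be the standard generator and let $c_1^{S^1}(M)\in H^2_{S^1}(M)$ denote the first equivariant Chern class of the equivariant complex bundle $TM\oplus\underline{\mathbb{R}}^k$; its restriction at a fixed point $p$ equals $c_1(M)(p)\cdot x$. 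Consider
\[
\alpha=\prod_{j=1}^{n-1}\bigl(c_1^{S^1}(M)-a_j\,x\bigr)\ \in\ H^{2(n-1)}_{S^1}(M;\mathbb{Q}).
\]
Because $\deg\alpha=2(n-1)<2n=\dim M$, the push-forward $\int_M\alpha$ lies in $H^{-2}(BS^1;\mathbb{Q})=0$.

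Now apply Theorem \ref{t25}. At an isolated fixed point $p$ the equivariant Euler class of $T_pM$, computed with respect to the complex structure and weights $w_{p,1},\dots,w_{p,n}$, is $\prod_{i=1}^n w_{p,i}\cdot x^n$; comparing this orientation with the orientation of $M$ introduces the factor $\epsilon(p)$ in each term, so that
\[
0=\int_M\alpha=\frac{1}{x}\sum_{p\in M^{S^1}}\epsilon(p)\cdot\frac{\prod_{j=1}^{n-1}(c_1(M)(p)-a_j)}{\prod_{i=1}^n w_{p,i}}.
\]
For every $p$ with $c_1(M)(p)\ne k$, the value $c_1(M)(p)$ lies in $\{a_1,\dots,a_{n-1}\}$, so the numerator vanishes; for $p$ with $c_1(M)(p)=k$ the numerator equals the nonzero constant $\prod_{j=1}^{n-1}(k-a_j)$. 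Dividing through by this constant yields the lemma.

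The main obstacle is simply to verify exactly where the sign $\epsilon(p)$ enters the localization formula, since this is the one place where the unitary setting genuinely deviates from the almost complex one; the rest of the argument is a direct generalization of the Pelayo--Tolman computation, with a trivial auxiliary observation that $a_1,\dots,a_{n-1}$ may always be chosen distinct from $k$.
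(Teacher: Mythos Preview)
Your proof is correct and rests on the same localization idea as the paper's, but the algebraic packaging differs slightly. The paper localizes the classes $(c_1^{S^1}(M))^j$ for $j=0,\dots,l-1$ (where $l\le n$ is the number of distinct values of $c_1(M)(p)$), obtains the linear system $B\cdot(A_1,\dots,A_l)^T=0$ with $B_{i,j}=k_i^{\,j-1}$ a Vandermonde matrix, and concludes that all $A_i$ vanish by inverting $B$. You instead fix one value $k$ and localize the single class $\prod_{j=1}^{n-1}(c_1^{S^1}(M)-a_j x)$, chosen so that its restriction vanishes at every fixed point with $c_1(M)(p)\neq k$; this is the Lagrange-interpolation form of the same Vandermonde argument. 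Your version is marginally more direct (one localization per value of $k$, no explicit matrix inversion), while the paper's version handles all values simultaneously; mathematically the two are equivalent.
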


\begin{proof}
Let $\{c_1(M)(p) \, | \, p \in M^{S^1}\}=\{k_1,\cdots,k_l\} \subset \mathbb{Z}$. For each $1 \leq i \leq l$, define 
\begin{center}
$\displaystyle A_i:=\sum_{p \in M^{S^1},c_1(M)(p)=k_i}\epsilon(p) \cdot \frac{1}{\prod_{m=1}^n w_{p,m}}$.
\end{center}
Let $B$ be an $l \times l$ matrix $B_{i,j}=(k_i)^{j-1}$ for $1 \leq i \leq l$ and $1 \leq j \leq l$. Since $l \leq n$, for $0 \leq j<l$, the number $2j - \dim M$ is negative. Thus, by a dimensional reason that $\int_M (c_1(M))^j$ is a map from $H_{S^1}^{2j}(M)$ to $H_{S^1}^{2j- \dim M}(BS^1)$, which is trivial, the image of $(c_1(M))^j$ under the map $\int_M$ vanishes;
\begin{center}
$\displaystyle \int_M (c_1(M))^j=0$.
\end{center}
On the other hand, by Theorem \ref{t25}, we have
\begin{center}
$\displaystyle \int_M (c_1(M))^j=\sum_{p \in M^{S^1}} \epsilon(p) \cdot \frac{(c_1(M)(p))^j}{\prod_{m=1}^n w_{p,m}}=\sum_{1 \leq i \leq l}\sum_{p \in M^{S^1},c_1(M)(p)=k_i} \epsilon(p) \cdot \frac{(k_i)^j}{\prod_{m=1}^n w_{p,m}}=\sum_{1 \leq i \leq l} B_{i,j+1} \cdot A_i$.
\end{center}
Therefore, we have that $B \cdot (A_1, \cdots, A_l)=(0,\cdots,0)$. Since $B$ is a Vandermonde matrix, $\det(B) \neq 0$, and this implies that $A_1=\cdots=A_l=0$.
\end{proof}

We say that a map $f:X \to Y$ is somewhere injective, if there is a point $y \in Y$ such that $f^{-1}(\{y\})$ is the singleton. As for symplectic manifolds and almost complex manifolds, Lemma \ref{l37} has the following consequence.

\begin{theorem} \label{t38} 
Let the circle act on a compact unitary manifold $M$. If the Chern class map is somewhere injective, then the action has at least $\frac{1}{2}\dim M+1$ fixed points.
\end{theorem}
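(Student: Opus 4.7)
The plan is a quick proof by contradiction that plugs directly into Lemma~\ref{l37}. Write $\dim M = 2n$; the existence of the Chern class map $c_1(M) : M^{S^1} \to \mathbb{Z}$ already presupposes a discrete fixed point set, so $\dim M$ is automatically even and $n$ is well-defined. Suppose toward a contradiction that $|M^{S^1}| \leq n$. Then the image $\{c_1(M)(p) \mid p \in M^{S^1}\}$ has cardinality at most $n$, so the hypothesis of Lemma~\ref{l37} is satisfied and we obtain
\begin{center}
$\displaystyle \sum_{p \in M^{S^1},\, c_1(M)(p) = k} \epsilon(p) \cdot \frac{1}{\prod_{i=1}^n w_{p,i}} = 0$
\end{center}
for every integer $k$.

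The second step is to invoke the somewhere injectivity hypothesis: choose $k_0 \in \mathbb{Z}$ such that $(c_1(M))^{-1}(k_0) = \{p_0\}$ consists of a single fixed point. Specializing the identity above to $k = k_0$ collapses the sum to a single term, forcing
\begin{center}
$\displaystyle \epsilon(p_0) \cdot \frac{1}{\prod_{i=1}^n w_{p_0,i}} = 0$.
\end{center}
This is absurd since $\epsilon(p_0) \in \{\pm 1\}$ and every weight $w_{p_0,i}$ is a nonzero integer. Hence the assumption $|M^{S^1}| \leq n$ is impossible, and the theorem follows with $|M^{S^1}| \geq n+1 = \frac{1}{2}\dim M + 1$.

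I do not anticipate any real obstacle here; once Lemma~\ref{l37} is in place, the argument is a one-line application of the somewhere-injective hypothesis, exactly in the spirit of Pelayo--Tolman \cite{PT}. The only feature to keep track of when passing from the almost complex to the unitary setting is the sign factor $\epsilon(p_0)$, and it causes no difficulty precisely because the singleton sum leaves no room for cancellation: a single $\pm 1$ divided by a nonzero integer is nonzero regardless of its sign. All the sign bookkeeping that would otherwise be delicate has already been absorbed into Lemma~\ref{l37} itself.
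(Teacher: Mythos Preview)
Your proof is correct and follows essentially the same approach as the paper's: both invoke Lemma~\ref{l37} and use the somewhere-injective hypothesis to produce a value $k_0$ for which the corresponding sum reduces to a single nonzero term $\epsilon(p_0)/\prod_i w_{p_0,i}$. The only cosmetic difference is that the paper phrases the argument via the contrapositive of Lemma~\ref{l37} (concluding directly that $\{c_1(M)(p)\}$ has at least $n+1$ elements), whereas you frame it as a proof by contradiction on $|M^{S^1}|\le n$; the underlying logic is identical.
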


\begin{proof}
By assumption, there exists $k \in \mathbb{Z}$ such that
\begin{center}
$\displaystyle \sum_{p \in M^{S^1}, \, c_1(M)(p)=k} \epsilon(p) \cdot \frac{1}{\prod_{i=1}^n w_{p,i}} \neq 0$.
\end{center}
By Lemma \ref{l37}, this implies that the set $\{c_1(M)(p) \, | \, p \in M^{S^1}\}$ contains at least $\frac{1}{2}\dim M+1$ elements, and hence there are at least $\frac{1}{2}\dim M+1$ fixed points. \end{proof}

In \cite{TW}, Tolman and Weitsman proved that if the circle acts semi-freely on a $2n$-dimensional compact almost complex manifold $M$ with a non-empty discrete fixed point set, then $\textrm{Todd}(M)>0$ and there are $\textrm{Todd}(M) \cdot 2^n$ fixed points. In \cite{J3}, the author generalized the result to so called primitive circle actions. We generalize the result for semi-free circle actions on almost complex manifolds to semi-free circle actions on unitary manifolds. While for an almost complex manifold we get a precise number of fixed points in terms of the Todd genus of the manifold, for a unitary manifold we get a lower bound on the number fixed points.

\begin{theo} \label{t39}
Let the circle act semi-freely on a $2n$-dimensional compact unitary manifold $M$ with a discrete fixed point set. Then the Hirzebruch $\chi_y$-genus of $M$ is $\chi_y(M)=\textrm{Todd}(M) \cdot (1-y)^n$. Moreover, $N_i = \textrm{Todd}(M) \cdot {n \choose i}$ for any $0 \leq i \leq n$, where $N_i=|\{p \in M^{S^1} ; \epsilon(p)=+1,n_p=i\}|-|\{p \in M^{S^1} ; \epsilon(p)=-1,n_p=i\}|$. In particular, if $\textrm{Todd}(M) \neq 0$, there are at least $|\textrm{Todd}(M)| \cdot 2^n$ fixed points.
\end{theo}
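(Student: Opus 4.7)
The plan is to apply Proposition \ref{p31} with $a = 1$, extract a recursion on the integers $N_i$, solve it explicitly, and read off both the $\chi_y$-genus formula and the lower bound on the number of fixed points.

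The first step is to observe that semi-freeness forces every weight at every fixed point to be $\pm 1$: if some $|w_{p,j}| = m > 1$, then the cyclic subgroup $\mathbb{Z}_m \subset S^1$ would fix a positive-dimensional submanifold through $p$ in the direction of the weight space $L_{p,j}$, contradicting that $S^1$ acts freely on $M \setminus M^{S^1}$. Consequently the minimum $a$ in Proposition \ref{p31} equals $1$, and for every fixed point $p$ of index $n_p = i$ we have $N_p(1) = n - i$ and $N_p(-1) = i$.

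Substituting these values into Proposition \ref{p31} and noting that $\sum_{p,\, n_p = i} \epsilon(p) = N_i$, I obtain the recursion
\[
(n - i)\, N_i = (i + 1)\, N_{i+1}, \qquad 0 \leq i \leq n-1.
\]
Theorem \ref{t21} at $i = 0$ gives $N_0 = \chi^0(M) = \textrm{Todd}(M)$, since the Todd genus equals $\chi_y(M)$ evaluated at $y=0$. A routine induction on $i$ then yields $N_i = \binom{n}{i}\, \textrm{Todd}(M)$ for all $0 \leq i \leq n$. Using $\chi^i(M) = (-1)^i N_i$ from Theorem \ref{t21}, the Hirzebruch genus is then
\[
\chi_y(M) = \sum_{i=0}^n (-1)^i N_i\, y^i = \textrm{Todd}(M) \sum_{i=0}^n \binom{n}{i} (-y)^i = \textrm{Todd}(M) (1-y)^n.
\]

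For the final bound, the elementary inequality $|N_i| \leq |\{p \in M^{S^1} : n_p = i\}|$ combined with the formula for $N_i$ gives, when $\textrm{Todd}(M) \neq 0$,
\[
|M^{S^1}| = \sum_{i=0}^n |\{p : n_p = i\}| \geq \sum_{i=0}^n |N_i| = |\textrm{Todd}(M)| \sum_{i=0}^n \binom{n}{i} = |\textrm{Todd}(M)| \cdot 2^n.
\]
The main obstacle I anticipate is just the clean verification that semi-freeness implies all weights are $\pm 1$, which relies on the standard equivariant local model near a fixed point; once this normalization is in hand, the recursion and its solution are purely formal consequences of Proposition \ref{p31} and Theorem \ref{t21}.
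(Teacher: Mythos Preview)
Your proof is correct, but it takes a different route from the paper. The paper's argument bypasses Proposition~\ref{p31} entirely: it plugs the weights $\pm 1$ directly into the localization expression for $\chi^0(M)$ in Theorem~\ref{t21}, obtaining
\[
\textrm{Todd}(M)=\chi^0(M)=\sum_{p}\epsilon(p)\cdot\frac{1}{\prod_j(1-t^{\pm 1})}=\frac{\sum_{i=0}^n N_i(-t)^i}{(1-t)^n},
\]
and then observes that because the left side is an integer, the numerator must equal $N_0(1-t)^n$, which immediately gives $N_i=\binom{n}{i}N_0$. Your approach instead feeds the normalization $N_p(1)=n-i$, $N_p(-1)=i$ into Proposition~\ref{p31} to obtain the recursion $(n-i)N_i=(i+1)N_{i+1}$ and solves it inductively. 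Both arguments rest on Theorem~\ref{t21} (Proposition~\ref{p31} is itself a consequence of it), but the paper's is a one-line power-series identity while yours trades that for a cleaner combinatorial recursion. The paper's route is shorter and avoids invoking the more technical Proposition~\ref{p31}; yours has the mild advantage of making the binomial structure of the $N_i$ appear as the unique solution of a familiar Pascal-type recursion rather than as a coefficient comparison.
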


\begin{proof}
Since the action is semi-free, every weight at any fixed point is either $+1$ or $-1$. The Todd genus $\textrm{Todd}(M)$ of $M$ is equal to the Hirzebruch $\chi_y$-genus with $y=0$. Therefore, $\textrm{Todd}(M)=\chi_0(M)=\sum_{i=0}^n \chi^i(M) \cdot 0^i=\chi^0(M)$. By Theorem \ref{t21},
\begin{center}
$\displaystyle \textrm{Todd}(M)=\chi^0(M)=N_0=\sum_{p \in M^{S^1}} \epsilon(p) \cdot \frac{1}{\prod_{i=1}^n (1-t^{\pm 1})}=\sum_{p \in M^{S^1}} \epsilon(p) \cdot \frac{(-t)^{n_p}}{(1-t)^n}=\frac{\sum_{i=0}^n N_i (-t)^i}{(1-t)^n}$.
\end{center}
Since $\chi^0(M)$ is an integer, the right side of the equation above must equal that integer. It follows that $\sum_{i=0}^n N_i (-t)^i=N_0 (1-t)^n$, and hence $N_i=N_0 \cdot {n \choose i}=\textrm{Todd}(M) \cdot {n \choose i}$ for $0 \leq i \leq n$. Since the number $|M^{S^1}|$ of fixed points satisfies $|M^{S^1}| \geq \sum_{i=0}^n |N_i| = \sum_{i=0}^n |\textrm{Todd}(M)| \cdot {n \choose i}=|\textrm{Todd}(M)| \cdot 2^n$, there are at least $|\textrm{Todd}(M)| \cdot 2^n$ fixed points. \end{proof}

In Theorem \ref{t39}, in the case that $\textrm{Todd}(M)>0$ and $M$ has precisely $\textrm{Todd}(M) \cdot 2^n$ fixed points, there exists a $2n$-dimensional compact connected almost complex manifold $M$ equipped with a semi-free circle action \cite{J4}.

For a compact almost complex $S^1$-manifold with a discrete fixed point set, Godinho and Sabatini \cite{GS}, Tolman \cite{T}, and Tolman and the author \cite{JT} showed that there exists a multigraph that encodes the fixed point data of the manifold, which is a collection of multisets of weights at the fixed points. For an almost complex manifold, each edge of such a multigraph has a direction, and is labeled by a positive integer.

On the other hand, for a compact unitary $S^1$-manifold $M$ with a discrete fixed point set, to associate such a multigraph we need to resolve the sign issue that occurs at each fixed point; this can be done by introducing a signed (directed labeled) multigraph.

\begin{Definition} \label{d1}
\begin{enumerate}[(1)]
\item A \textbf{multigraph} is a pair $(V,E)$ where $V$ is a set of vertices and $E$ is a set of edges.
\item A multigraph $(V,E)$ is called \textbf{directed}, if there are maps $i:E \to V$ and $t:E \to V$ giving the initial vertex and terminal vertex of each edge.
\item A multigraph $(V,E)$ is called \textbf{labeled}, if there is a map $w$ from $E$ to the set of positive integers. For an edge $e$, $w(e)$ is called the \textbf{label} of $e$.
\item A multigraph is called \textbf{signed}, if every vertex has \textbf{sign} $+1$ or $-1$.
\end{enumerate}
\end{Definition}

\begin{Definition} \label{d2}
Let the circle act on a compact unitary manifold $M$ with a discrete fixed point set. We say that a directed labeled signed multigraph $\Gamma=(V,E)$ \textbf{describes} (the fixed point data of) $M$ if the following hold:
\begin{enumerate}[(1)]
\item The vertex set $V$ of $\Gamma$ is equal to the fixed point set $M^{S^1}$ of $M$.
\item For any vertex $p$, the sign of $p$ is equal to $\epsilon(p)$.
\item For any vertex $p$, the multiset of the weights at the corresponding fixed point $p$ are $\{\epsilon(p) \cdot w(e) \, | \, i(e)=p\} \sqcup \{- \epsilon(p) \cdot w(e) \, | \, t(e)=p\}$.
\item For every edge $e$, the fixed points $i(e)$ and $t(e)$ lie in the same connected component of $M^{\mathbb{Z}/(w(e))}$.
\end{enumerate}
\end{Definition}

Let $M$ be a compact almost complex manifold, equipped with a circle action having a discrete fixed point set. Then $\epsilon(p)=+1$ for every fixed point $p$. This means that if a directed labeled signed multigraph $\Gamma$ describes $M$, for any edge $e$ of $\Gamma$, a fixed point $p$ corresponding to the initial vertex $i(e)(=p)$ of $e$ has weight $\epsilon(p)\cdot w(e)=+w(e)$ and a fixed point $q$ corresponding to the terminal vertex $t(e)(=q)$ of $e$ has weight $\epsilon(q)\cdot (-w(e))=-w(e)$.

On the other hand, for a unitary manifold, the situation is more complicated. To discuss, suppose that a directed labeled signed multigraph $\Gamma$ describes a compact unitary $S^1$-manifold $M$ with a discrete fixed point set. Let $e$ be an edge of $\Gamma$, and let $w$ be the label of the edge $e$. Let $p$ be the initial vertex (fixed point) of $e$, and let $q$ be the terminal vertex (fixed point) of $e$. Then Definition \ref{d2} implies the following:
\begin{enumerate}[(1)]
\item Suppose that $\epsilon(p)=+1$ and $\epsilon(q)=+1$. Then $e$ gives $p$ weight $\epsilon(p) \cdot w(e)=+w$ and gives $q$ weight $-\epsilon(q) \cdot w(e)=-w$.
\item Suppose that $\epsilon(p)=+1$ and $\epsilon(q)=-1$. Then $e$ gives $p$ weight $\epsilon(p) \cdot w(e)=+w$ and gives $q$ weight $-\epsilon(q) \cdot w(e)=+w$. 
\item Suppose that $\epsilon(p)=-1$ and $\epsilon(q)=+1$. Then $e$ gives $p$ weight $\epsilon(p) \cdot w(e)=-w$ and gives $q$ weight $-\epsilon(q) \cdot w(e)=-w$. 
\item Suppose that $\epsilon(p)=-1$ and $\epsilon(q)=-1$. Then $e$ gives $p$ weight $\epsilon(p) \cdot w(e)=-w$ and gives $q$ weight $-\epsilon(q) \cdot w(e)=+w$. 
\end{enumerate}
In addition, the two fixed points $p$ and $q$ are in the same connected component $F$ of $M^{\mathbb{Z}/(w(e))}$, the set of points in $M$ that are fixed by the $\mathbb{Z}/(w(e))$-action on $M$; the group $\mathbb{Z}/(w(e))$ acts on $M$ as a subgroup of $S^1$. The component $F$ is naturally a unitary manifold by Lemma \ref{l23}, and is a smaller dimensional compact unitary submanifold if the label $w(e)$ of the edge is bigger than 1; if $w(e)=1$, then the subgroup $\mathbb{Z}/(w(e))$ is trivial and so $F=M^{\mathbb{Z}/(w(e))}=M$.

Generalizing the existence of a multigraph describing a compact almost complex $S^1$-manifold with a discrete fixed point set in \cite{GS}, \cite{JT}, \cite{J5}, \cite{T}, we prove that for a compact unitary $S^1$-manifold $M$ with a discrete fixed point set, there exists a directed labeled signed multigraph describing $M$. Any directed labeled multigraph describing a compact almost complex $S^1$-manifold with a discrete fixed point set is obtained from our multigraph describing a unitary $S^1$-manifold in this paper, simply by taking $\epsilon(p)=+1$ for all fixed points $p$. For a multigraph, by a self-loop we mean an edge whose initial vertex and terminal vertex coincide.

\begin{pro} \label{p312} Let the circle act on a compact unitary manifold $M$ with a discrete fixed point set. Then there exists a directed labeled signed multigraph describing $M$ that has no self-loops. \end{pro}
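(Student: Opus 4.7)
The plan is to construct the edges of the desired multigraph one label at a time. Fix a positive integer $w$. For each connected component $F$ of $M^{\mathbb{Z}_w}$ that meets $M^{S^1}$, Lemma \ref{l32} identifies $F$ as a compact unitary $S^1$-manifold with $\epsilon_F(p)=\epsilon_M(p)$ for every $p \in F^{S^1}$, and for such $p$ the weights $\pm w$ in $T_pF$ coincide with the weights $\pm w$ in $T_pM$ (they are exactly the weights at $p$ divisible by $w$, and the condition of being equal to $\pm w$ is preserved). Applying Lemma \ref{l33} to $F$ with the integer $w$ yields
\[
\sum_{p \in F^{S^1}} \epsilon(p) \cdot N_p(w) \;=\; \sum_{p \in F^{S^1}} \epsilon(p) \cdot N_p(-w).
\]

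Regrouping fixed points by the sign of $\epsilon$, this rewrites as an equality between two nonnegative totals: the \emph{outgoing $w$-slots} in $F$ (one per occurrence of weight $\epsilon(p)\cdot w$ at $p\in F^{S^1}$) and the \emph{incoming $w$-slots} in $F$ (one per occurrence of weight $-\epsilon(p)\cdot w$ at $p\in F^{S^1}$). Any bijection between these two collections produces a family of directed label-$w$ edges both of whose endpoints lie in $F$, so conditions (1)--(4) of Definition \ref{d2} are satisfied by construction. Iterating over all $w \geq 1$ and over all such $F$ assembles a multigraph describing $M$.

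To enforce the no-self-loop condition, the bijection must be chosen so that no outgoing slot at $p$ is paired with an incoming slot at the same $p$. This becomes a bipartite perfect matching problem with the diagonal pairs $p\mapsto p$ forbidden, whose solvability reduces via Hall's marriage theorem to the local inequality $N_p(w)+N_p(-w)\leq \tfrac{1}{2}\sum_{q\in F^{S^1}}(N_q(w)+N_q(-w))$ at each $p \in F^{S^1}$. To verify this, I would feed the Atiyah--Bott--Berline--Vergne localization (Theorem \ref{t25}) into $F$: applied to $1\in H^0_{S^1}(F)$ it already forces $F$ to have at least two fixed points whenever $\dim F>0$, since otherwise a single nonzero term would have to vanish; further applications of ABBV to equivariant classes of higher degree, together with an induction that descends to the components of $F^{\mathbb{Z}_{w'}}$ for proper divisors $w'\mid w$, should rule out the remaining extremal configurations where a single vertex could hoard more than half of the $w$-slots. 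The main obstacle I anticipate is precisely this self-loop avoidance: Lemma \ref{l33} delivers only the global slot balance in each $F$, while avoiding self-loops demands the local Hall-type inequality at every vertex, so the heart of the argument lies in upgrading ABBV-type identities into the quantitative bounds needed, in parallel with the almost complex constructions of \cite{GS}, \cite{JT}, \cite{J5}, and \cite{T}.
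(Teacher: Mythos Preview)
Your proposal has a genuine gap at exactly the point you flag as the ``main obstacle'': you never establish the Hall-type inequality $N_p(w)+N_p(-w)\leq \frac{1}{2}\sum_{q\in F^{S^1}}(N_q(w)+N_q(-w))$, and the vague plan involving ABBV localization and induction on divisors is not a proof. The difficulty is real: Lemma \ref{l33} gives only the global slot balance in each $F$, and nothing you have written upgrades that to the per-vertex bound needed to forbid self-loops.

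The paper sidesteps this problem entirely by using the finer, index-stratified identity of Proposition \ref{p31} in place of Lemma \ref{l33}. Applied to the component $F$ of $M^{\mathbb{Z}_w}$ (where $w$ is the smallest positive weight), Proposition \ref{p31} yields, for every $i$,
\[
\sum_{\substack{p\in F^{S^1}\\ n_p^F=i}} \epsilon_F(p)\,N_p^F(w)
\;=\;
\sum_{\substack{p\in F^{S^1}\\ n_p^F=i+1}} \epsilon_F(p)\,N_p^F(-w).
\]
At this level the matching of slots is immediate and automatically loop-free: one pairs a weight $w$ at a point of index $i$ either with a weight $w$ at another point of index $i$ having the \emph{opposite} sign $\epsilon$, or with a weight $-w$ at a point of index $i+1$ having the \emph{same} sign $\epsilon$; and symmetrically on the right-hand side. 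In every case the two endpoints differ either in $n_p^F$ or in $\epsilon_F$, so they are distinct fixed points and no self-loop is produced. No Hall argument, ABBV computation, or local inequality is needed.

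In short, the missing idea in your approach is to stratify by index \emph{before} matching. Once you trade Lemma \ref{l33} for Proposition \ref{p31}, the self-loop issue evaporates; staying with Lemma \ref{l33} leaves you with an unproved combinatorial inequality.
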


\begin{proof}
Assign a vertex to each fixed point; the sign of the vertex is equal to the sign of the corresponding fixed point. Let $w$ be a positive integer. Let $F$ be a connected component of $M^{\mathbb{Z}_w}$ such that $F \cap M^{S^1} \neq \emptyset$. By Lemma \ref{l32}, the circle action on $M$ restricts to act on $F$ so that $F$ is naturally a unitary $S^1$-manifold; there is a complex structure $J_1$ on $TF \oplus \underline{\mathbb{R}}^k$ preserved by the circle action; moreover, $\epsilon_M(p)=\epsilon_F(p)$ for all $p \in F \cap M^{S^1}$. Let $\dim F=2m$. For a fixed point $p \in F \cap M^{S^1}$, the smallest positive weight of the circle action in $T_pF$ is $w$. Therefore, by applying Proposition \ref{p31} to the circle action on $F$, for any $i \in \{0,1,\dots,m-1\}$,
\begin{equation} \label{eq5}
\displaystyle \sum_{p \in F^{S^1}, n_p^F=i} \epsilon_F(p) \cdot N_p^F(w)=\sum_{p \in F^{S^1}, n_p^F=i+1} \epsilon_F(p) \cdot N_p^F(-w),
\end{equation}
where $N_p^F(w)=|\{j  : w_{p,j}=w, 1 \leq j \leq n\}|$ is the number of times weight $w$ occurs in $T_pF$ at $p$, for any integer $w$. Here, $n_p^F$ is the number of negative weights in $T_pF$ for any $p \in F^{S^1}$. Therefore, for each $i \in \{0,1,\dots,m-1\}$ we can exhaust all weights $w$ over all fixed points of index $i$ in $F^{S^1}$ and all weights $-w$ over all fixed points of index $i+1$ in $F^{S^1}$ to draw edges with label $w$ as follows.
\begin{enumerate}[(a)]
\item If two fixed points $p$ and $q$ in $F^{S^1}$ satisfy $n_p^F=n_q^F=i$, $1=\epsilon_F(p)=-\epsilon_F(q)$, and both of $p$ and $q$ have weight $w$, then draw an edge from $p$ to $q$ giving label $w$. Both weights are taken from the left hand side of Equation \eqref{eq5}.
\item If two fixed points $p$ and $q$ in $F^{S^1}$ satisfy $n_p^F+1=n_q^F=i+1$, $1=\epsilon_F(p)=\epsilon_F(q)$, $p$ has weight $w$, and $q$ has weight $-w$, then draw an edge from $p$ to $q$ giving label $w$. One weight $w$ is taken from the left hand side and another weight $-w$ is taken from the right hand side of Equation \eqref{eq5}.
\item If two fixed points $p$ and $q$ in $F^{S^1}$ satisfy $n_p^F+1=n_q^F=i+1$, $-1=\epsilon_F(p)=\epsilon_F(q)$, $p$ has weight $w$, and $q$ has weight $-w$, then draw an edge from $q$ to $p$ giving label $w$. One weight $w$ is taken from the left hand side and another weight $-w$ is taken from the right hand side of Equation \eqref{eq5}.
\item If two fixed points $p$ and $q$ in $F^{S^1}$ satisfy $n_p^F=n_q^F=i+1$, $1=\epsilon_F(p)=-\epsilon_F(q)$, and both of $p$ and $q$ have weight $-w$, then draw an edge from $q$ to $p$ giving label $w$. Both weights are taken from the right hand side of Equation \eqref{eq5}.
\end{enumerate}

This recipe exhausts all weights $\pm w$ in Equation \eqref{eq5}. Since $\epsilon_M(p)=\epsilon_F(p)$ for all $p \in F \cap M^{S^1}$, by the way we draw edges, for any edge $e$, if $e$ has label $w$, the edge $e$ gives the initial vertex (fixed point) $r$ weight $\epsilon(r) \cdot w$ and gives the terminal vertex (fixed point) $s$ weight $-\epsilon(s) \cdot w$.

By repeating the above arguments for every positive integer $w$ and every connected component of $M^{\mathbb{Z}_w}$, this proposition holds. \end{proof}

Let $M$ be a compact unitary $S^1$-manifold with a discrete fixed point set. 
Let $w>1$ be an integer and let $p$ be a fixed point. Suppose that exactly $k$ weights $w_{p,i_1}$, $\cdots$, $w_{p,i_k}$ at $p$ are divisible by $w$. Let $F$ be a connected component of the set $M^{\mathbb{Z}_w}$ of points in $M$ fixed by the $\mathbb{Z}_w$-action, which contains $p$.
The circle action on $M$ restricts to act on $F$ and its fixed point set is $F^{S^1}=F \cap M^{S^1}$. For this action on $F$, the weights in $T_pF$ at $p$ are precisely those weights $w_{p,i_1}$, $\cdots$, $w_{p,i_k}$. This implies that the dimension of $F$ is $2k$, which is twice of the number of weights at $p$ that are multiples of $w$. Conversely, if $\dim F=2k$, then $p$ has exactly $k$ weights divisible by $w$. Let $\Gamma$ be a multigraph describing $M$. 
The above discussion means that for a vertex (fixed point) $p$, the number of edges of $p$ whose labels are divisible by $w$, is the half of the dimension of the isotropy submanifold $F$ fixed by the $\mathbb{Z}_w$-action, which is compact and unitary.

Moreover, a sub-multigraph of $\Gamma$ describes the $S^1$-action on the isotropy submanifold $F$. Let $\Gamma_F$ be a sub-multigraph of $\Gamma$ obtained as follows:
\begin{enumerate}
\item The vertex set of $\Gamma_F$ is equal to $F \cap M^{S^1}$.
\item For each vertex $q$ in $\Gamma_F$, edges of $q$ in $\Gamma_F$ are the edges of $q$ of $\Gamma$ whose labels are divisible by $w$.
\item For each vertex $q$ in $\Gamma_F$, the sign of $q$ in $\Gamma_F$ is equal to the sign of $q$ in $\Gamma$.
\end{enumerate}
By Definition \ref{d2}, this sub-multigraph $\Gamma_F$ describes the induced $S^1$-action on $F$ with induced unitary structure in the sense of Lemma \ref{l32}. For an example, see Example \ref{ex4}.

From the signs and weights at the fixed points we can determine the Chern numbers of $M$, because we can compute the Chern number of $M$ in terms of the signs and weights at the fixed points, in the Atiyah-Bott-Berline-Vergne localization formula, Theorem \ref{t25} \cite{AB}. Moreover, by Theorem \ref{t21}, we can determine the Hirzebruch $\chi_y$-genus of $M$. Because a multigraph encodes the signs and weights at the fixed points, these invariants are also encoded in a multigraph. Thus, considering what possible multigraphs are for a given unitary $S^1$-manifold can determine these invariants of the manifold, and can be useful for limiting possible examples.

We discuss the classification of circle actions on compact unitary manifolds, when the number of fixed points is small. First, if the circle acts on a compact unitary manifold $M$ with $1$ fixed point $p$, the manifold must be the point itself, that is, $M=\{p\}$. Second, if there are 2 fixed points, the following result is known.

\begin{theo} \cite{Ko2} \label{t313}
Let the circle act on a compact unitary manifold $M$ with 2 fixed points. Let $p$ and $q$ be the fixed points. Then exactly one of the following holds:
\begin{enumerate}[(1)]
\item $\dim M=2$, $\epsilon(p)=\epsilon(q)$, and the weight at $p$ and $q$ are $a$ and $-a$, respectively, for some positive integer $a$.
\item $\dim M=6$, $\epsilon(p)=\epsilon(q)$, and the weights at $p$ and $q$ are $\{-a-b,a,b\}$ and $\{-a,-b,a+b\}$, respectively, for some positive integers $a$ and $b$.
\item $\epsilon(p)=-\epsilon(q)$ and the weights at $p$ and $q$ agree up to order.
\end{enumerate}
\end{theo}

In any of the cases in Theorem \ref{t313}, there exists a manifold; the sphere $S^{2n}$ is such an example. We provide examples of manifolds in Theorem \ref{t313} and multigraphs describing these manifolds.

\begin{figure}
\begin{subfigure}[b][5.7cm][s]{.29\textwidth}
\centering
\vfill
\begin{tikzpicture}[state/.style ={circle, draw}]
\node[state] (a) at (0,0) {$p,+$};
\node[state] (b) at (0,4) {$q,+$};
\path (a) [->] edge node[left] {$a$} (b);
\end{tikzpicture}
\vfill
\caption{$\dim M=2$, $\epsilon(p)=\epsilon(q)=1$}\label{fig1-1}
\vspace{\baselineskip}
\end{subfigure}\qquad
\begin{subfigure}[b][5.7cm][s]{.29\textwidth}
\centering
\vfill
\begin{tikzpicture}[state/.style ={circle, draw}]
\node[state] (a) at (0,0) {$p,+$};
\node[state] (b) at (0,4) {$q,+$};
\path (a) [->] [bend left =20]edge node[left] {$a$} (b);
\path (a) [->] edge node[left] {$b$} (b);
\path (b) [->] [bend left =20]edge node[right] {$a+b$} (a);
\end{tikzpicture}
\vfill
\caption{$\dim M=6$, $\epsilon(p)=\epsilon(q)=1$}\label{fig1-2}
\vspace{\baselineskip}
\end{subfigure}\qquad
\begin{subfigure}[b][5.7cm][s]{.29\textwidth}
\centering
\vfill
\begin{tikzpicture}[state/.style ={circle, draw}]
\node[state] (a) at (0,0) {$p,+$};
\node[state] (b) at (0,4) {$q,-$};
\path (a) [->] [bend left =35]edge node[left] {$a_1$} (b);
\path (a) [->] [bend left =20]edge node[right] {$a_2 \cdots$} (b);
\path (a) [->] [bend right =35]edge node[right] {$a_n$} (b);
\end{tikzpicture}
\vfill
\caption{$\dim M=2n$, $\epsilon(p)=-\epsilon(q)=1$}\label{fig1-3}
\vspace{\baselineskip}
\end{subfigure}\caption{2 fixed points}\label{fig1}\qquad
\end{figure}

\begin{exa} \label{ex1}
Consider $S^2$ as an (almost) complex manifold. Let the circle act on $S^2$ by rotating $a$-times. The north pole $p$ and the south pole $q$ are the fixed points, and the weight at the north pole is $+a$ and the weight at the south pole is $-a$. Figure \ref{fig1-1} describes $S^2$ with this action.
\end{exa}

\begin{exa} \label{ex2}
Regarded as $G(2)/SU(3)$, the 6-sphere $S^6$ admits an almost complex structure, and it admits a circle action with 2 fixed points that preserves the almost complex structure. The weights at the fixed points are $\{-a-b,a,b\}$ and $\{-a,-b,a+b\}$ for some positive integers $a$ and $b$. Figure \ref{fig1-2} describes $S^6$ with this action.
\end{exa}

\begin{exa} \label{ex3} Let $n$ be any positive integer. Let $S^{2n}=\{(z_1,\cdots,z_n,x) \in \mathbb{C}^n \times \mathbb{R} \, | \, x^2+\sum_{i=1}^n |z_i|^2=1\}$ be the $2n$-dimensional sphere. There exists a complex structure on the vector bundle $TS^{2n} \oplus \underline{\mathbb{R}}^2=S^{2n} \times \mathbb{R}^{2n+2}=S^{2n} \times \mathbb{C}^{n+1}$ and hence $S^{2n}$ is unitary. Let $S^1$ act on $S^{2n}$ by
\begin{center}
$g \cdot (z_1,\cdots,z_n,x)=(g^{a_1} z_1, \cdots, g^{a_n} z_n, x)$
\end{center}
for all $g \in S^1$ and $(z_1,\cdots,z_n,x) \in S^{2n}$, where $a_i$ are positive integers, $1 \leq i \leq n$. There are two fixed points, $p=(0,\cdots,0,1)$ and $q=(0,\cdots,0,-1)$ with $\epsilon(p)=+1$ and $\epsilon(q)=-1$. Both of the fixed points have weights $\{a_1,\cdots,a_n\}$. Therefore, Figure \ref{fig1-3} describes $S^{2n}$ with this action. If some $a_i$ is instead a negative integer, the direction of the corresponding edge is reversed. Note that if $n \neq 1, 3$, $S^{2n}$ does not admit an almost complex structure. Figure \ref{fig1-3} cannot occur as a multigraph describing a compact almost complex $S^1$-manifold since the fixed point $q$ has $\epsilon(q)=-1$. \end{exa}

\begin{figure}
\begin{subfigure}[b][6.2cm][s]{.46\textwidth}
\centering
\vfill
\begin{tikzpicture}[state/.style ={circle, draw}]
\node[state] (a) at (0,0) {$p,+$};
\node[state] (b) at (0,4) {$q,+$};
\path (a) [->] [bend left =35]edge node[left] {$2$} (b);
\path (a) [->] [bend left =15]edge node[right] {$3$} (b);
\path (a) [->] [bend right =10]edge node[right] {$5$} (b);
\path (a) [->] [bend right =35]edge node[right] {$6$} (b);

\end{tikzpicture}
\vfill
\caption{Multigraph describing $S^8$}\label{fig2-1}
\vspace{\baselineskip}
\end{subfigure}\qquad
\begin{subfigure}[b][6.2cm][s]{.46\textwidth}
\centering
\vfill
\begin{tikzpicture}[state/.style ={circle, draw}]
\node[state] (a) at (0,0) {$p,+$};
\node[state] (b) at (0,4) {$q,-$};
\path (a) [->] [bend left =20]edge node[right] {$3$} (b);
\path (a) [->] [bend right =20]edge node[right] {$6$} (b);
\end{tikzpicture}
\vfill
\caption{(Sub)multigraph describing $F=S^4=(S^8)^{\mathbb{Z}_3}$}\label{fig2-2}
\vspace{\baselineskip}
\end{subfigure}\caption{Isotropy submanifold and its (sub)multigraph}\label{fig2}\qquad
\end{figure}
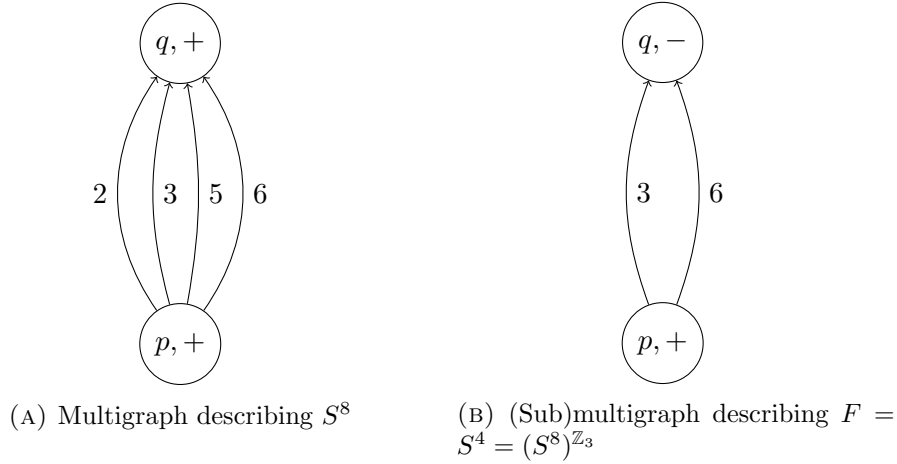

In below, we shall explain in an example that for a multigraph describing a unitary $S^1$-manifold, its sub-multigraph describes its isotropy submanifold.

\begin{exa} \label{ex4} In Example \ref{ex2}, take $n=4$, $a_1=2$, $a_2=3$, $a_3=5$, and $a_4=6$. Then Figure \ref{fig2-1} describes this action on $S^8$. We shall consider the set $F:=(S^8)^{\mathbb{Z}_3}$, the set of points in $S^8$ that are fixed by the $\mathbb{Z}_3$-action. From the action, it follows that 
\begin{center}
$F=\{(0,z_1,0,z_2,x) \in \mathbb{C}^4 \times \mathbb{R} \, | \, x^2+|z_2|^2+|z_4|^2=1\}$.
\end{center}
The unitary structure on $M$ through the identification $TS^8 \times \underline{\mathbb{R}}^2 = S^8 \times \mathbb{C}^5$ induces a unitary structure on $F$ through the identification $TF \oplus \underline{\mathbb{R}}^2|_F=TS^4 \times \underline{\mathbb{R}}^2|_F=S^4 \times \mathbb{R}^6=S^4 \times \mathbb{C}^3$, and thus $F$ is a smaller dimensional compact unitary submanifold, as proved in Lemma \ref{l23}. The circle action on $M$ restricts to act on $F$, and this action on $F$ also has $p$ and $q$ as fixed points with weights $\{3,6\}$ for both fixed points. In addition, with this unitary structure on $F$, $\epsilon_F(p)=\epsilon_M(p)=+1$ and $\epsilon_F(q)=\epsilon_M(q)=-1$; also see Lemma \ref{l32}. Then Figure \ref{fig2-2} describes this action on $F$, which is a sub-multigraph of Figure \ref{fig2-1} consisting of edges in Figure \ref{fig2-1} whose labels are multiples of 3.
\end{exa}

\bibliographystyle{alpha}
\bibliography{jang}

\end{document}